\documentclass[]{article}
\usepackage{graphicx} 

\usepackage{amsfonts}
\usepackage{amsmath}
\usepackage{amssymb}
\usepackage{amsthm}
\usepackage{euscript}
\usepackage{xcolor}
\usepackage[]{hyperref}
\usepackage{geometry}

\hypersetup{
citecolor={blue},
}

\usepackage[numbers,sort]{natbib}

\newtheorem{thm}{Theorem}
\newtheorem{lem}{Lemma}

\newtheorem{assump}{Assumption}

\newcommand{\R}{\mathbb R}

\newcommand{\C}{\mathbb C}
\newcommand{\N}{\mathbb N}
\newcommand{\E}{\mathbb E}


\def\bI{\boldsymbol{I}}

\def\bM{\boldsymbol{M}}

\def\bS{\boldsymbol{S}}

\def\bU{\boldsymbol{U}}

\def\bY{\boldsymbol{Y}}

\def\cA{\mathcal{A}}

\def\cC{\mathcal{C}}

\def\cN{\mathcal{N}}
\def\cO{\mathcal{O}}

\def\ba{\boldsymbol{a}}

\def\bx{\boldsymbol{x}}
\def\by{\boldsymbol{y}}
\def\bz{\boldsymbol{z}}
\def\bzero{\boldsymbol{0}}

\def\bxi{\boldsymbol{\xi}}



\DeclareMathOperator{\dist}{\mathrm{dist}}

\title{Acceleration and Implicit Regularization in Gaussian Phase Retrieval}
\author{Tyler Maunu \qquad Martin Molina-Fructuoso\\ Brandeis University}
\date{November 2023}

\begin{document}
\maketitle

\begin{abstract}
  We study accelerated optimization methods in the Gaussian phase retrieval problem. In this setting, we prove that gradient methods with Polyak or Nesterov momentum have similar implicit regularization to gradient descent. This implicit regularization ensures that the algorithms remain in a nice region, where the cost function is strongly convex and smooth despite being nonconvex in general. This ensures that these accelerated methods achieve faster rates of convergence than gradient descent. Experimental evidence demonstrates that the accelerated methods converge faster than gradient descent in practice.
\end{abstract}

\section{Introduction}

While convex optimization is by now a mature field with an array of well-understood algorithms, the properties of nonconvex optimization algorithms have only recently begun to be illuminated. Understanding nonconvex optimization algorithms is of fundamental importance for progress in data science and machine learning since many optimization problems we wish to solve are nonconvex. Furthermore, it is important to understand how variants of these nonconvex optimization algorithms, such as stochastic or accelerated methods, perform in various settings. 

A prototypical nonconvex optimization problem arises when one considers the Burer-Monteiro factorization of programs over positive semidefinite (PSD) matrices \cite{burer2003nonlinear}. Here, for a PSD matrix $\bS\in \R^{n \times n}$ one uses the parametrization $\bS = \bU \bU^T$, and then applies a standard algorithm like gradient descent to optimize over $\bU$ rather than $\bS$. This factorization can be used in solving many problems, such as matrix completion, phase retrieval, covariance sketching, and more \cite{sanghavi2017local,li2019nonconvex,ma2020implicit}.
The Burer-Monteiro factorization has been shown to work well in practice despite it typically leading to nonconvex optimization problems. Furthermore, it represents a concrete case where we can understand the properties of nonconvex optimization algorithms. 

While many works have analyzed the optimization landscapes of factorized matrix problems, we draw our inspiration from the work of \citet{ma2020implicit}. Here, the authors study how gradient descent interacts with a random observation model on the specific problems of phase retrieval, matrix completion, and blind deconvolution. 
In this work, we focus on one of the specific problems of \citet{ma2020implicit} for clarity, although these results can be generalized. The \emph{phase retrieval} problem seeks a signal vector $\bx_\star \in \R^n$ from observations
\begin{equation}\label{eq:probs}
    y_i = |\ba_i^T \bx_\star|^2
\end{equation}
collected from a set of known sensing vectors $\ba_1, \dots, \ba_m$. In the general case, one considers sensing vectors and signals $\ba_i, \bx_\star \in \C^n$, but here we focus on the real case for simplicity of exposition. The Gaussian phase retrieval problem refers to the case where the sensing vectors $\ba_i$ are standard Gaussian \cite{candes2015phase}.

In effect, they show that gradient descent with proper initialization interacts well with the random measurement model by developing an analysis of its 
\emph{implicit regularization}.  Broadly speaking, implicit regularization refers to a bias in an optimization algorithm that only exists implicitly rather than explicitly.
In effect, the implicit regularization result of \citet{ma2020implicit} ensures that the optimization problem in question behaves much like a convex optimization problem despite the problem being nonconvex in general. 

Implicit regularization can be used to show that certain optimization methods perform well on nonconvex problems, and the type of regularization varies from setting to setting. In certain settings, it ensures that the algorithm finds a solution that minimizes some norm \cite{gunasekar2017implicit,li2018algorithmic,vaskevicius2019implicit,min2021explicit,zhao2022high} or yields outputs that are projections with respect to a Bregman divergence or minimum entropy solutions \cite{gunasekar2018characterizing,woodworth2020kernel,even2023s}.
Alternatively, as we mentioned in the case of \citet{ma2020implicit}, one can show that methods maintain some independence properties in random sensing models. Almost all implicit regularization results for nonconvex methods rely on finding a good starting point, either by spectral initialization or by some other cleverly chosen point.

While many results exist exploring the implicit biases of gradient descent and its stochastic counterpart, the implicit biases of accelerated algorithms are not well understood. It is essential to better understand their implicit biases since in practical nonconvex optimization such as training state-of-the-art deep learning methods, practitioners use some variant of momentum (such as in Adam \cite{kingma2014adam}).
Therefore, in this work, we set out to better understand the implicit regularization of gradient descent with momentum in the context of the specific nonconvex problem of phase retrieval. 

\subsection{Outline of Contributions}

Here, we summarize the main contributions of our work. These findings are also presented in Figure \ref{fig:accelex}: accelerated gradient descent (AGD) methods achieve faster convergence than gradient descent on the Gaussian phase retrieval problem in theory and practice.
\begin{enumerate}
    \item We develop the first implicit regularization analysis for two accelerated gradient methods on the Gaussian phase retrieval problem. These methods utilize Polyak's momentum \cite{polyak1964some} and Nesterov's momentum \cite{nesterov1983method}. Our analysis shows that with spectral initialization, these accelerated methods maintain incoherence and locality properties throughout their iterations. To accomplish this, we develop a novel leave-one-out analysis that works on pairs of iterates rather than the iterates themselves. To our knowledge, this is the first implicit regularization result for accelerated gradient methods on a matrix factorization problem.
    \item As a result of this analysis, we are able to show that accelerated gradient methods converge faster than gradient descent on the Gaussian phase retrieval problem. These are the first results of guaranteed convergence for accelerated algorithms on the phase retrieval problem, even though such methods were previously considered in works such as \cite{xu2018accelerated,wang2020quickly}.
    \item While past works experimentally verify the fast convergence of momentum methods, we also supplement our results to show that accelerated gradient methods achieve faster convergence than gradient descent in practice.
\end{enumerate}

\begin{figure}
    \centering
    \includegraphics[width = .9\columnwidth]{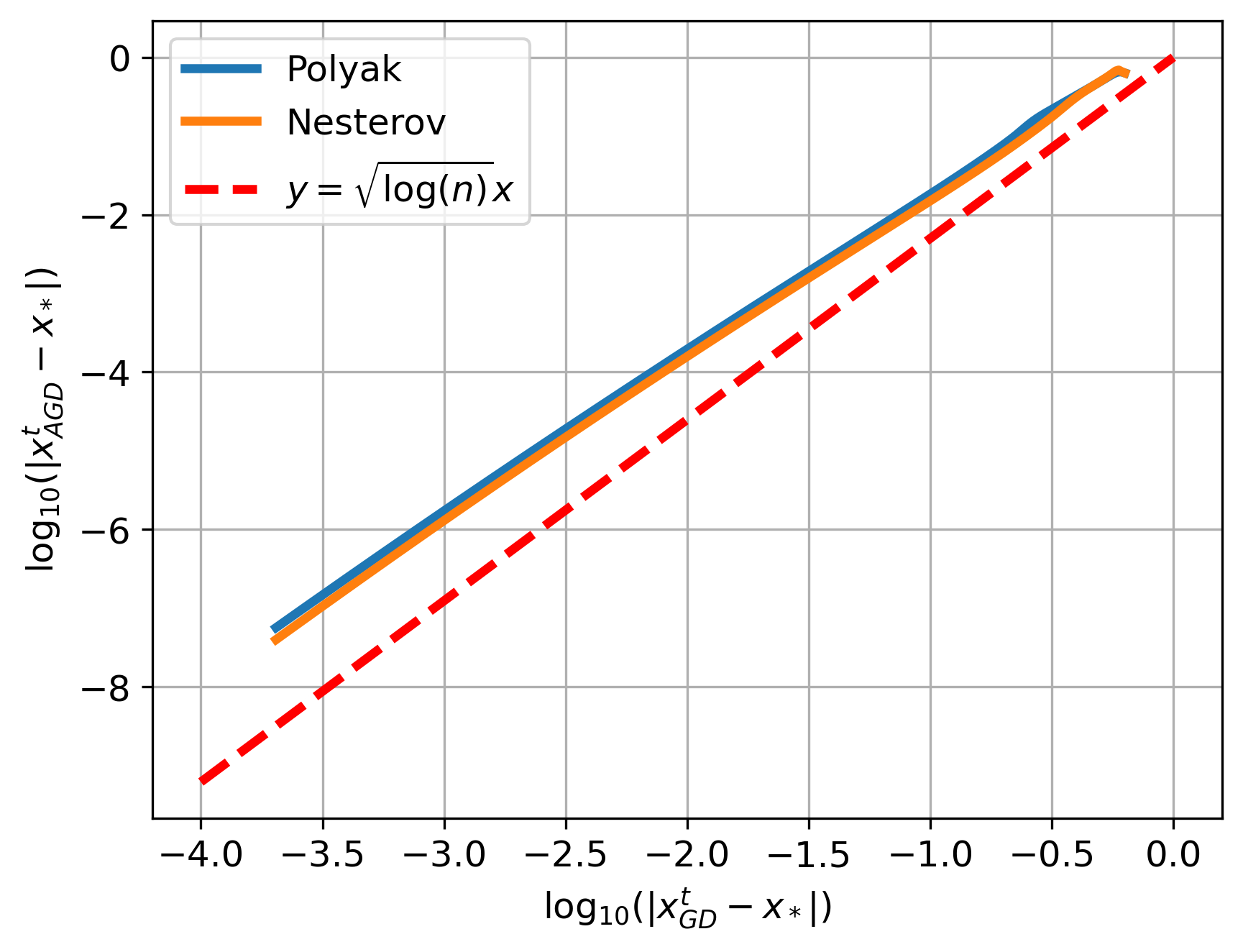}
    \caption{Accelerated gradient methods achieve faster convergence than gradient descent in both practice and in theory on the Gaussian phase retrieval problem. Here, Polyak and Nesterov momentum are used to accelerate gradient descent. The y-axis measures the log-error of the accelerated method, whereas the x-axis measures the log-error of the standard gradient method at each iteration. As we can see, after the same amount of iterations has passed for each method, accelerated gradient descent is approximately a factor of $\sqrt{\log(n)}$ closer to the ground truth on a log scale, which is supported by our theory.}
    \label{fig:accelex}
\end{figure}

\subsection{Review of Related Work}

Many techniques exist to solve the phase retrieval problem \eqref{eq:probs}, and so we review some of them here. 

Other variants of the phase retrieval problem consider different types of sensing vectors. These include holographic phase retrieval \cite{barmherzig2022towards}, ptychography \cite{thibault2008high}, near-field phase retrieval \cite{wang2020phase}, and far-field phase retrieval \cite{zhuang2022practical}. Here, we considered the problem of Gaussian phase retrieval \cite{candes2015phase}.

One popular technique for recovering the vector $\bx_*$ involves solving convex relaxations \cite{candes2013phaselift}. However, solving the resulting semidefinite programs is typically expensive. 

Many past works have studied first-order algorithms to recover $\bx^\star$ the observations \eqref{eq:probs}, and, as mentioned, these have shown efficient recovery.
\citet{candes2015phase} study the Wirtinger Flow algorithm, which has become a staple in the phase retrieval literature. This algorithm is gradient descent on the least squares objective that we consider. Here, the authors prove local linear convergence of the WF algorithm and show it needs $O(n \log(1/\epsilon)$ iterations to converge to an $\epsilon$-accurate solution. 
\citet{sun2018geometric} show that the objective has no local minimizers, all global minimizers are equal up to a phase shift, and all saddle points are strict saddles. To take advantage of this structure of the problem, they advocate the use of a trust-region algorithm.

Later, \citet{ma2020implicit} improved the analysis of gradient descent with an implicit regularization style analysis to $O(\log(n) \log(1/\epsilon))$. 
Even more recently, these results have recently been improved to $O(\log n + \log(1/\epsilon)$ by \cite{chen2019gradient,lee2023randomly}, whose results rely on random initialization. The work of \citet{chen2019gradient} focuses on a two-stage analysis of gradient descent, while the work of \citet{lee2023randomly} focuses on an alternating least squares approach.

Accelerated gradient methods for phase retrieval were considered in \cite{xu2018accelerated,wang2020quickly}.
Some preliminary results by \cite{wang2020quickly} showed that, at least in the population setting, heavy ball for phase retrieval converges to the benign region more quickly. However, proof of this fact for the finite sample setting has not yet been established.
Other work has considered higher-order methods, such as sketched Newton methods \cite{luo2023recursive}.

Also, some works have studied phase retrieval in alternative settings. Sparse phase retrieval is a common paradigm where the underlying signal of interest is assumed to be sparse \cite{eldar2014sparse,shechtman2014gespar,wang2017sparse}. The works \cite{wu2020continuous,wu2021hadamard} consider implicit regularization for sparse phase retrieval, where the underlying vector is assumed to be sparse. Others considered phase retrieval with generative priors, such as the work \cite{hand2018phase}. Finally, there are also works on robust phase retrieval, such as \cite{candes2013phaselift,duchi2019solving}.

Geometric approaches to solving the phase retrieval problem exist \cite{huang2017solving,hou2020fast,maunu2023matrix}, and some methods also consider alternative energies \cite{duchi2019solving,wang2018phase,maunu2023matrix}.

Finally, our work adds to the literature on implicit regularization of methods for nonconvex optimization. Much past work has focused on gradient descent, but some works have looked at accelerated methods as well \cite{pagliana2019implicit,wang2023implicit,ghosh2023implicit}. None of these results has considered the implicit regularization of accelerated regularization in matrix factorization problems, though.

\subsection{Notation}

We use bold lowercase letters to denote vectors and bold uppercase letters to denote matrices. When applied to vectors, $\|\cdot\|_2$ represents the Euclidean norm, whereas when applied to matrices, it represents the spectral norm.

\section{Background}

In this section, we give the necessary background to understand our contributions. First, we will outline simple approaches to solve the phase retrieval problem via first-order optimization algorithms. After this, we review related work on the use of accelerated first-order methods for the phase retrieval problem. Finally, we will review the general theory of first-order accelerated algorithms in convex optimization.

\subsection{Phase Retrieval}

The phase retrieval problem arises in settings such as X-ray crystallography \cite{candes2015phase}, and it also has applications to optical problems, in particular, medical optical imaging \cite{shechtman2015phase}, microscopy \cite{miao2008extending}, and astronomical imaging \cite{shechtman2015phase}.

While the practical phase retrieval problem considers various measurement operators $\cA$, such as Fourier observations, we consider a specific mathematical setting. This is also referred to as the \emph{Gaussian phase retrieval problem}.
\begin{assump}\label{assump:sensenormal}
    The sensing vectors $\ba_i$, $i=1, \dots, m$, are sampled i.i.d.~from a standard Gaussian distribution $\mathcal{N}(\boldsymbol{0},\bI)$.
\end{assump}
Other variants of the phase retrieval problem can be studied, where the sensing vectors take on other forms. See the examples in \cite{barmherzig2019holographic}. In ptychography, the observations are Fourier measurements of an objected modulated by a fixed illumination pattern focused over patches at a time \cite{thibault2008high}.

It is natural to consider recovering the underlying signal $\bx_\star$ by solving an optimization problem. For example, one can consider the least squares problem
\begin{equation}\label{eq:lsf}
    f(\bx) = \frac{1}{4m} \sum_{i=1}^m ((\ba_i^T \bx)^2 - y_i)^2.
\end{equation}
We note that this is equivalent to a Burer-Monteiro factorization \cite{burer2003nonlinear} on the low-rank matrix sensing problem \cite{fazel2008compressed}. In fact, this type of problem has a further structure in that the sensing matrices are rank one, which has been considered in \cite{chen2015exact,cai2015rop}.

Due to the ambiguity between $\pm \bx$ in the observations \eqref{eq:probs} as well as in the cost function \eqref{eq:lsf}, we can only hope to recover $\bx_*$ up to sign. Therefore, we adopt the following notion of distance as in \cite{ma2020implicit}.
\begin{equation}
    \dist(\bx, \bx_*) = \min \{ \|\bx - \bx_*\|, \|\bx + \bx^*\|\}.
\end{equation}

\citet{candes2015phase} introduced a gradient descent algorithm for the phase retrieval problem. 
The gradient of the functional \eqref{eq:lsf} is given by
\begin{equation}\label{eq:1der}
    \nabla f(\bx) = \frac{1}{m} \sum_{i=1}^m ((\ba_i^T \bx)^2 - y_i)\ba_i\ba_i^T \bx.
\end{equation}
The gradient descent algorithm follows the iterative updates
\begin{equation}
    \bx^{t+1} = \Big(\bI - \eta \frac{1}{m} \sum_{i=1}^m ((\ba_i^T \bx)^2 - y_i)\ba_i\ba_i^T\Big) \bx^t
\end{equation}
\citet{ma2020implicit} showed this algorithm to exhibit an implicit regularization property that leads to linear convergence,
\begin{equation}
\label{eq:GDRegResult}
    \|\bx^t-\bx_\star\| \leq C (1- \eta \|\bx_\star\|_2^2/ 2)^t \|\bx_\star\|_2
\end{equation}
for some absolute constant $C$.

\subsection{Accelerated Methods in Phase Retrieval}
\label{subsec:accmeth}

In this work, we consider accelerated first-order methods. These are not new and indeed have been considered in past works for phase retrieval, such as \cite{xu2018accelerated,ajayi2018provably,xiong2018convergence,xiong2020analytical,wang2020quickly}. In \cite{ajayi2018provably}, the authors study an adaptation of Nesterov's accelerated method for the matrix sensing problem that exhibits linear convergence up to an error that depends on the momentum parameter.
In \cite{wang2020quickly}, the authors study how a heavy ball performs with random initialization, although the proof is incorrect.
\citet{xiong2018convergence,xiong2020analytical} show convergence of \eqref{eq:HB} and \eqref{eq:FG} using techniques from control theory. However, they do not demonstrate acceleration over standard gradient methods and require a generic regularity condition. In \cite{xu2018accelerated}, the authors demonstrate the advantages of accelerated Wirtinger flow but offer no proof of convergence.
None of these past works show convergence to the ground truth signal, and furthermore, no past works consider these accelerated gradient methods from the lens of implicit regularization. Therefore, our work answers the important outstanding question in the literature on the convergence of these momentum methods. 

The two accelerated gradient methods considered in this paper are Polyak's Heavy Ball \cite{polyak1964some} and Nesterov's Accelerated Gradient \cite{nesterov1983method} methods. The iteration that defines gradient descent with Polyak momentum is
\begin{align}\label{eq:HB}\tag{\sf{P}}
    \bx^{t+1} &= \bx^t - \eta \nabla f(\bx^t) + \beta(\bx^t - \bx^{t-1})
\end{align}
where $\bx^1 = \bx^0$. On the other hand, gradient descent with Nesterov momentum takes the form
\begin{align}\label{eq:FG}\tag{\sf{N}}
    \bx^{t+1} &= \bx^{t} - \eta \nabla f(\bx^{t} +\beta(\bx^t - \bx^{t-1})) + \beta(\bx^{t} - \bx^{t-1})
\end{align}
where again $\bx^1=\bx_0$.

We note that the primary difference between the two is that Nesterov momentum takes the gradient at an extrapolated point, whereas Polyak momentum uses the gradient at the current iterate. As we point out in our analysis, controlling the Nesterov momentum's implicit regularization is more challenging due to this extrapolation.

In some convex settings, the convergence of these accelerated methods is well understood. For a twice continuously differentiable function $f$, we say that $f$ is $\mu$-strongly convex and $L$-smooth if 
\begin{equation}
    \mu \bI \preceq \nabla^2 f(\bx) \preceq L \bI,
\end{equation}
for all $\bx$. The condition number is the ratio $\kappa = L / \mu$. 
Under the assumption of strong convexity and smoothness, the convergence of \eqref{eq:HB} and \eqref{eq:FG} can be succinctly stated in the following theorem.
\begin{thm}[\cite{polyak1964some,bubeck2015convex}]\label{thm:convcompare}
    Suppose that we run gradient descent, \eqref{eq:HB} or \eqref{eq:FG} on a $\mu$-strongly convex and $L$-smooth function $f$. Then, gradient descent with step size $\eta = 1/L$ achieves the convergence bound
    \begin{equation}
        \|\bx_t - \bx_*\|_2 \lesssim \exp(- t/{\kappa}) \|\bx_t - \bx_0\|_2.
    \end{equation}
    On the other hand, \eqref{eq:HB} with step size $\eta=\frac{4}{(\sqrt{\mu}+\sqrt{L})^2}$ and momentum parameter $\beta = \frac{\sqrt{\kappa}-1}{\sqrt{\kappa}+1}$ and \eqref{eq:FG} with step size $\eta = 1/L$ and momentum parameter $\beta = \frac{\sqrt{\kappa}-1}{\sqrt{\kappa}+1}$ achieve the convergence bound
    \begin{equation}
        \|\bx_t - \bx_*\|_2 \lesssim \exp(-c_{1}t/\sqrt{\kappa}) \|\bx_t - \bx_0\|_2,
    \end{equation}
    Here, the constant $c_1 = 2$ for \eqref{eq:HB} and $c_1 = 1$ for \eqref{eq:FG}.
\end{thm}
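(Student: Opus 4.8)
The plan is to reduce each method to a linear recursion for the error $\be_t := \bx_t - \bx_*$ and control the growth of the resulting transition operator. \textbf{Gradient descent.} Since $\nabla f(\bx_*) = \bzero$, the fundamental theorem of calculus gives $\nabla f(\bx_t) = \bH_t\be_t$, where $\bH_t := \int_0^1 \nabla^2 f(\bx_* + s\be_t)\,\di s$ satisfies $\mu\bI \preceq \bH_t \preceq L\bI$. Hence $\be_{t+1} = (\bI - \eta\bH_t)\be_t$, and with $\eta = 1/L$ the spectral norm of $\bI - \eta\bH_t$ is at most $1 - 1/\kappa$; iterating and using $1 - 1/\kappa \le \exp(-1/\kappa)$ gives the first bound (with the initialization error $\|\bx_0 - \bx_*\|_2$ on the right).

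For the accelerated methods I would lift the two-term recursions \eqref{eq:HB} and \eqref{eq:FG} to a one-term recursion on the pair $\bz_t := (\be_t, \be_{t-1})$. Writing $\nabla f(\bx_t) - \nabla f(\bx_*) = \bH_t\be_t$ for Polyak, and $\nabla f(\bx_t + \beta(\bx_t - \bx_{t-1})) - \nabla f(\bx_*) = \widetilde{\bH}_t\bigl(\be_t + \beta(\be_t - \be_{t-1})\bigr)$ for Nesterov, each update takes the form $\bz_{t+1} = \bM_t\bz_t$ for a $2\times 2$ block matrix $\bM_t$ whose blocks are polynomials in $\bH_t$ (resp.\ $\widetilde{\bH}_t$), $\eta$, and $\beta$. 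In the model case where $f$ is quadratic, the Hessian is a fixed matrix $\bA$, and diagonalizing $\bA$ decouples the recursion into $n$ scalar systems indexed by an eigenvalue $\lambda \in [\mu, L]$, each governed by a $2\times 2$ companion-type matrix with first row $(1 + \beta - \eta\lambda,\ -\beta)$ and second row $(1,\ 0)$ for Polyak, and by the analogous Nesterov matrix. The key computation is to check that, for the prescribed $\eta$ and $\beta$, the discriminant of the characteristic polynomial is negative for \emph{every} $\lambda \in [\mu, L]$; the two eigenvalues are then complex conjugates of modulus $\sqrt{\beta}$ (Polyak), uniformly in $\lambda$, and of the corresponding modulus for Nesterov. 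Expanding $\sqrt{\beta} = \sqrt{(\sqrt{\kappa}-1)/(\sqrt{\kappa}+1)}$ for large $\kappa$ then produces the stated $\exp(-c_1 t/\sqrt{\kappa})$ rates.

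Two points remain. First, a $2\times 2$ matrix with complex eigenvalues of modulus $r < 1$ is generally not a Euclidean contraction, so converting the spectral-radius estimate into a genuine bound on $\|\be_t\|$ requires either absorbing a fixed polynomial prefactor via $\|\bM^t\| \le C(\epsilon)(r+\epsilon)^t$ with $\epsilon$ chosen small, or passing to an adapted norm in which each scalar block is an honest contraction. Second, for non-quadratic $f$ the Hessians $\bH_t$ vary with $t$ and share no common eigenbasis, so the diagonalization argument breaks; here I would instead invoke the standard Lyapunov-function argument for \eqref{eq:HB} and Nesterov's estimate-sequence argument for \eqref{eq:FG} from \cite{polyak1964some, bubeck2015convex}, which deliver the global rates directly from $\mu$-strong convexity and $L$-smoothness. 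The main obstacle is precisely this combination: establishing the discriminant inequality uniformly over the whole spectrum and then turning the eigenvalue modulus into an honest bound on $\|\be_t\|$ in the presence of non-normality and, for general $f$, time-varying curvature; the gradient-descent estimate, by contrast, is immediate from the integrated-Hessian contraction.
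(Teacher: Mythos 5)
The paper does not actually prove Theorem \ref{thm:convcompare}; it is stated as a classical result cited from \cite{polyak1964some,bubeck2015convex} and used as a black box, so there is no in-paper proof to compare against. That said, your sketch captures the standard route accurately: the gradient-descent bound via the integrated Hessian $\bH_t = \int_0^1 \nabla^2 f(\bx_* + s\be_t)\,\di s$ is exactly right, and the reduction of the momentum methods to a block recursion on $(\be_t, \be_{t-1})$, followed by diagonalization to $2\times 2$ companion matrices with complex-conjugate eigenvalues of modulus $\sqrt{\beta}$ once the discriminant is negative uniformly over $[\mu, L]$, is the textbook argument for quadratic $f$. You are also correct to single out the two real obstacles: the companion blocks are non-normal (spectral radius alone does not bound the Euclidean norm without an adapted norm or a subexponential prefactor), and for non-quadratic $f$ the time-varying integrated Hessians share no common eigenbasis.

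One caution on your proposed fallback for \eqref{eq:HB}: there is no known Lyapunov argument that delivers the $\exp(-2t/\sqrt{\kappa})$ rate for Heavy Ball \emph{globally} on an arbitrary $\mu$-strongly convex, $L$-smooth function. Polyak's original result is local (it requires twice differentiability and bounds only near the optimum), and the Lessard--Recht--Packard counterexample shows that Heavy Ball with the ``optimal'' quadratic parameters can fail to converge on a non-quadratic strongly convex loss. The global estimate-sequence (or potential-function) argument in \cite{bubeck2015convex} does give the $\sqrt{\kappa}$ rate, but only for Nesterov's method \eqref{eq:FG}. So the Heavy Ball part of Theorem \ref{thm:convcompare} should be read as a quadratic-case or local statement. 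This actually matches how the paper deploys it: Lemma \ref{lem:contraction} applies the argument locally, inside the RIC where the integrated Hessian is sandwiched between $\mu \bI$ and $L\bI$, and (like your sketch) it silently passes from an eigenvalue bound on the non-normal block matrix $\bM(t)$ to a norm bound, which is exactly the non-normality gap you flagged.
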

As we can see, the accelerated methods achieve a faster rate, where the improvement is by a factor of $\sqrt{\kappa}$. This means that, to reach an error $\epsilon$, GD needs $O(\kappa \log(1/\epsilon))$ iterations while the accelerated methods need $O(\sqrt{\kappa}\log(1/\epsilon))$ iterations.
Notice that one needs to select the parameters for the momentum methods carefully. However, as we will see in the next section, in the Gaussian phase retrieval problem, we have bounds for $\mu$ and $L$ that allow us to set the parameters in these methods.

\section{Convergence for Accelerated Gradient Methods}

In this section, we give our main theorems proving convergence of \eqref{eq:HB} and \eqref{eq:FG} on the Gaussian phase retrieval problem. First, in Section \ref{subsec:impreg}, we review the ideas of implicit regularization from \cite{ma2020implicit} used to prove the convergence of gradient descent on this problem. Then, in Section \ref{subsec:mainthm}, we give our main convergence theorems and a sketch of their proofs. Finally, Section \ref{subsec:disc} discusses these results and compares them with those for gradient descent.

\subsection{Implicit Regularization}
\label{subsec:impreg}

\citet{ma2020implicit} give implicit regularization results for low-rank matrix recovery problems. The main idea is to ensure that the iterates of gradient descent remain in a good region, which the authors call the \textit{region of incoherence and contraction (RIC)}. In this region, one can show that the cost function \eqref{eq:lsf} is strongly convex and smooth.

To see how this might be the case, consider the Hessian of the cost function,
\begin{equation}\label{eq:2der}
    \nabla^2 f(\bx) = \frac{1}{m} \sum_{i=1}^m (3(\ba_i^T \bx)^2 - y_i)\ba_i\ba_i^T.
\end{equation}
One can show that $ \bI \preceq \E_{\ba_i} \nabla^2 f(\bx) \preceq 10 \bI$ for $\bx$ in a small ball around $\bx_*$,
\begin{equation}\label{eq:loc}\tag{\sf{LOC}}
    \|\bx^t - \bx_\star\|_2 \leq 2 C_1 \|\bx_\star\|_2
\end{equation}
This is a locality property, which says the iterate needs to be sufficiently close to optimal for the cost function to be strongly convex and smooth. Thus, we might expect the function to be smooth and strongly convex for large samples. However, the concentration of the Hessian requires a suboptimal number of samples $m=\Omega(n^2)$ samples, while $m = \Omega(n)$ samples is sufficient to identify $\bx_*$ \cite{chen2015exact}. 

To transfer these ideas to the small sample setting, one can show that the Hessian of $f$ is sufficiently well-behaved, provided that $\bx$ obeys an additional condition besides locality. The extra condition that defines the RIC besides \eqref{eq:loc} is the incoherence condition,
\begin{equation}\label{eq:inc}\tag{\sf{INC}}
    \max_{i} |\ba_i^T(\bx - \bx_\star)| \leq C_2 \log(n) \| \bx_* \|.
\end{equation}
This condition ensures that the vector $\bx-\bx_*$ is not too aligned with any specific vector $\ba_i$.

Under the two assumptions \eqref{eq:loc} and \eqref{eq:inc}, Lemma 1 of \cite{ma2020implicit} states that with probability at least $1-\cO (mn^{-10})$,
\begin{equation}
\label{eq:MaLOCINC}
    \frac{1}{2} \bI \preceq \nabla^2 f(\bx) \preceq O(\log(n)) \bI.
\end{equation}
The main idea is that the additional restriction of incoherence allows one to apply concentration bounds that hold for sample sizes $m=\Omega(n\log n)$ rather than $m=\Omega(n^2)$. 

With this lemma in hand, one might expect to be able to show that \eqref{eq:HB} and \eqref{eq:FG} achieve faster convergence due to Theorem \ref{thm:convcompare}. However, there is an outstanding issue: this theorem assumes that the optimization landscape is globally strongly convex and smooth. However, this is not the case for the phase retrieval function, since it is only strongly convex and smooth inside the RIC.

To find an initial point in the RIC, we use a spectral initialization \cite{ma2020implicit}. Indeed, setting
$$\bx^0=\sqrt{\lambda_1(\bY)/3} \widetilde{\bx}^0,$$ 
where $\lambda_1(\bY)$ and $\widetilde{\bx}^0$ are the leading eigenvalue and eigenvector of $\bY=\frac{1}{m} \sum_{i=1}^{m} y_i \ba_i \ba_i^T $, guarantees that inequalities \eqref{eq:loc} and \eqref{eq:inc} are satisfied for $\bx^0$.

\subsection{Convergence of (\ref{eq:HB}) and (\ref{eq:FG})}
\label{subsec:mainthm}

The main theorem in this work is the same as the main theorem of \cite{ma2020implicit} for phase retrieval with the notable change that the convergence improves from $\kappa$ to $\sqrt{\kappa}$, as in Theorem \ref{thm:convcompare}. In practical terms, with spectral initialization, \eqref{eq:HB} and \eqref{eq:FG} need $O(\sqrt{\log n} \log(1/\epsilon))$ iterations rather than the $O(\log n \log(1/\epsilon))$ to reach an error $\epsilon$. As we demonstrate in experiments, this is backed up by real scenarios, where we see accelerated method demonstrating faster convergence.
\begin{thm}[Convergence of \eqref{eq:HB} and \eqref{eq:FG}]\label{thm:HB}
Suppose that $\bx_*$ is a fixed vector with $\|\bx_*\|=1$ and $\ba_j$ follow Assumption \ref{assump:sensenormal}. Provided that $0 \leq \eta \lesssim \frac{1}{c\log n \|\bx_0\|^2}$, $m \gtrsim n \log n$,  $\beta=\frac{\sqrt{c\log(n)}-\sqrt{1/2}}{\sqrt{c\log(n)}+\sqrt{1/2}}$ for some sufficiently large constant $c$, then with probability at least $1 - O(mn^{-9})$, \eqref{eq:HB} and \eqref{eq:FG} with spectral initialization achieve the following contraction and incoherence:
\begin{equation}
    \dist(\bx^t, \bx_*) \leq \epsilon \left(1 - \sqrt{\eta} \|\bx_*\|_2^2/2 \right)^t \| \bx_*\|_2
\end{equation}
\begin{equation}
    \max_j |\ba_j^T(\bx^t - \bx_*)| \leq c_2 \sqrt{\log n} \|\bx_*\|_2.
\end{equation}
\end{thm}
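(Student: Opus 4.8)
The plan is to mirror the induction-based leave-one-out argument of \cite{ma2020implicit}, but to carry the induction on \emph{pairs} of consecutive iterates $(\bx^{t}, \bx^{t-1})$ rather than on single iterates, since the momentum update couples two successive points. Concretely, I would formulate an induction hypothesis that simultaneously asserts, for all $s \le t$: (i) the contraction bound $\dist(\bx^{s}, \bx_*) \le C_1 \|\bx_*\|$ (locality \eqref{eq:loc}), (ii) the incoherence bound $\max_j |\ba_j^T(\bx^{s} - \bx_*)| \le C_2 \sqrt{\log n}\,\|\bx_*\|$ \eqref{eq:inc}, (iii) proximity of $\bx^{s}$ to the corresponding leave-one-out iterate $\bx^{s,(j)}$ (the sequence run with the $j$-th measurement removed), and (iv) proximity of $\bx^{s,(j)}$ to $\bx^{s}$ in the $\ba_j$-direction. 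Crucially, because both \eqref{eq:HB} and \eqref{eq:FG} involve $\bx^{s}$ and $\bx^{s-1}$, each of these four statements must be maintained at \emph{two} time indices, so the hypothesis is really a statement about the pair. The spectral initialization result quoted at the end of Section~\ref{subsec:impreg} gives the base case at $t=0$ (with $\bx^1 = \bx^0$, so the pair is degenerate and trivially consistent).

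For the induction step, I would linearize the update. Writing the momentum iteration in the lifted form $\begin{pmatrix}\bx^{t+1}\\ \bx^{t}\end{pmatrix} = \bT^{t}\begin{pmatrix}\bx^{t}\\ \bx^{t-1}\end{pmatrix} + (\text{affine terms})$, where the $2n\times 2n$ matrix $\bT^t$ has a block structure built from $\bI - \eta\,\nabla^2 f(\xi^t)$ (with $\xi^t$ a point on the segment between iterates, or the extrapolated point in the Nesterov case) together with the momentum block $\beta\bI$. Under the induction hypothesis, both $\bx^t$ and $\bx^{t-1}$ (and the Nesterov extrapolation point, which is a convex-combination-like expression of the two) lie in the RIC, so Lemma~1 of \cite{ma2020implicit} (equation~\eqref{eq:MaLOCINC}) gives $\tfrac12 \bI \preceq \nabla^2 f(\xi^t) \preceq O(\log n)\bI$. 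This is exactly the strong-convexity/smoothness regime of Theorem~\ref{thm:convcompare} with $\mu = 1/2$, $L = c\log n$, hence $\kappa \asymp \log n$ and $\sqrt\kappa \asymp \sqrt{\log n}$; the chosen $\beta$ and the constraint on $\eta$ are precisely the accelerated parameter settings. The contraction factor $(1 - \sqrt\eta\|\bx_*\|^2/2)$ then follows from a spectral-radius / Lyapunov analysis of the product of the $\bT^t$'s, which is where the $\sqrt\kappa$ improvement enters — I would use either the standard $2\times 2$ diagonalization per eigendirection of the Hessian (à la Polyak) or a one-step Lyapunov function valid for slowly-varying $\bT^t$.

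To propagate incoherence, for each fixed $j$ I would compare $\bx^{t+1}$ with the leave-one-out iterate $\bx^{t+1,(j)}$, run with the same momentum recursion on $f^{(j)}$, the objective omitting the $j$-th term. The key points: $\bx^{t,(j)}$ is (by construction) independent of $\ba_j$, so $\ba_j^T(\bx^{t,(j)} - \bx_*)$ concentrates at scale $\|\bx^{t,(j)} - \bx_*\| \lesssim \|\bx_*\|$ by a standard Gaussian tail bound (yielding the $\sqrt{\log n}$ factor after a union bound over the $m$ indices and the iterations); and the difference $\bx^{t+1} - \bx^{t+1,(j)}$ obeys its \emph{own} lifted linear recursion, whose homogeneous part is again governed by a Hessian in the RIC (hence a contraction) and whose inhomogeneous part is the single dropped gradient term $\tfrac{1}{m}((\ba_j^T\bx)^2 - y_j)\ba_j\ba_j^T\bx$, small because it is $O(1/m)$ times a quantity controlled by the current incoherence. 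Summing the geometric series controls $\|\bx^{t+1} - \bx^{t+1,(j)}\|$, and then the triangle inequality $|\ba_j^T(\bx^{t+1}-\bx_*)| \le |\ba_j^T(\bx^{t+1,(j)}-\bx_*)| + \|\ba_j\|\,\|\bx^{t+1} - \bx^{t+1,(j)}\|$ closes the incoherence part of the induction.

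\textbf{The main obstacle.} I expect the hard part to be the leave-one-out bookkeeping under momentum: the error vector $\bx^{t} - \bx^{t,(j)}$ depends on \emph{both} $\bx^{t-1} - \bx^{t-1,(j)}$ \emph{and} $\bx^{t-2} - \bx^{t-2,(j)}$-type history through the recursion, and — unlike in plain gradient descent — one cannot bound it by a simple one-step contraction; one must control a product of nearly-identical $2n \times 2n$ matrices whose blocks depend on different evaluation points for the true and leave-one-out sequences. Making this telescoping rigorous, while keeping the accumulated constants from blowing up over $O(\sqrt{\log n}\,\log(1/\epsilon))$ iterations, is the delicate step. The Nesterov case is strictly harder than Polyak because the gradient is taken at the extrapolated point $\bx^t + \beta(\bx^t - \bx^{t-1})$, which must itself be shown to lie in the RIC and to be close to its leave-one-out analogue; handling this extra extrapolation term, and verifying that $\beta(\bx^t - \bx^{t-1})$ stays $O(\|\bx_*\|)$ in norm, is where most of the new technical work beyond \cite{ma2020implicit} will be concentrated.
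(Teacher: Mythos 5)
Your proposal matches the paper's proof strategy almost exactly: both track the pair $(\bx^t,\bx^{t-1})$ through the lifted $2n\times 2n$ recursion, invoke Lemma~1 of \cite{ma2020implicit} to get $\mu=1/2$, $L=O(\log n)$ once the intermediate/extrapolated point is shown to lie in the RIC, run a leave-one-out induction on the pair $(\bx^t-\bx^{t,(\ell)},\,\bx^{t-1}-\bx^{t-1,(\ell)})$, and close the incoherence bound by triangle inequality plus Gaussian concentration for the independent leave-one-out iterate. One small correction to your "main obstacle": you anticipate needing to control a product of nearly-identical $\bT^t$'s and to sum a geometric series, but in the lifted coordinates the paper closes the leave-one-out induction in a single step, writing $\bx^{t+1}-\bx^{t+1,(\ell)}$ via a mean-value Hessian at one intermediate point $\bxi^t$ plus a single small perturbation (the dropped gradient term), so that the uniform bound $C_3\sqrt{\log n/n}$ is maintained invariantly rather than accumulated over time; the product-of-matrices difficulty you fear does not actually arise.
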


To fix ideas, we present the main ideas in the proof for Polyak momentum. The proof for Nesterov momentum needs similar steps and estimations with more involved expressions. The fundamental ingredient of the proof of the rate of our algorithm is the fact that, for a strongly convex and smooth function, two consecutive iterations of \eqref{eq:HB} with the parameters of Theorem \ref{thm:HB} achieve the contraction 
     \begin{equation}
     \label{eq:HBcont}
        \left\| \begin{bmatrix}
            \bx^{t+1} - \bx_* \\
            \bx^t -\bx_*
        \end{bmatrix} \right\|_2 \leq \left(\frac{\sqrt{L}-\sqrt{\mu}}{\sqrt{L}+\sqrt{\mu}} \right) 
        \left\|
        \begin{bmatrix}
            \bx^t-\bx_* \\
            \bx^{t-1} -\bx_*
        \end{bmatrix}
        \right\|_2.
    \end{equation}

The repeated application of this relation, i.e., 
    \begin{equation}
        \left\| \begin{bmatrix}
            \bx^{t+1} - \bx_* \\
            \bx^t -\bx_*
        \end{bmatrix} \right\|_2 \leq \left(\frac{\sqrt{L}-\sqrt{\mu}}{\sqrt{L}-\sqrt{\mu}} \right)^t
        \left\|
        \begin{bmatrix}
            \bx^1-\bx_* \\
            \bx^{0} -\bx_*
        \end{bmatrix}
        \right\|_2
    \end{equation}

would be enough to show the algorithm's convergence if $f$ was strongly convex and smooth. However, because $f$ is nonconvex in general, to use this reasoning directly, we need to make sure that all the iterations of gradient descent remain in the RIC, i.e., that they satisfy \eqref{eq:loc} and \eqref{eq:inc}.

While the detailed proof of Theorem \ref{thm:HB} is given in the supplementary material, we sketch the induction we use here.

\begin{enumerate}
    \item First, we notice that for points in the RIC (namely close enough to $\bx_*$ \eqref{eq:loc} and in the incoherence region \eqref{eq:inc}), the sampled function \eqref{eq:lsf} is strongly convex and smooth with high probability.
    \item Second, we prove a contraction-like result for iterations $\bx_t$ in the RIC of the form
    \begin{equation}
        \left\| \begin{bmatrix}
            \bx^{t+1} - \bx_* \\
            \bx^t -\bx_*
        \end{bmatrix} \right\|_2 \leq M
        \left\|
        \begin{bmatrix}
            \bx^t-\bx_* \\
            \bx^{t-1} -\bx_*
        \end{bmatrix}
        \right\|_2,
    \end{equation}
    where $M$ depends only on the constants of strong convexity and smoothness of $f$. This result follows from the facts that the segment $[\bx_*,\bx^t]$ is in the RIC for \eqref{eq:HB} and that $f$ can be considered strongly convex and smooth in the RIC \eqref{eq:MaLOCINC}.  In a similar way, we can show that for\eqref{eq:FG}, the segment $[\bx_*,\bx^t + \beta(\bx^t-\bx^{t-1})]$ is in the RIC. 
    \item We then proceed by induction to show that all iterations for a starting point $\bx_0$ in the RIC remain in the RIC with high probability using a leave-one-out argument. 
\end{enumerate}

Our proof strategy is based on a leave-one-out approach, where a new function is constructed
\begin{equation}\label{eq:costloo}
    f^{(\ell)}(\bx) = \frac{1}{4m} \sum_{i: i \neq \ell} ((\ba_i^T \bx)^2 - y_i)^2,
\end{equation}
for $\ell = 1, \dots, m$. The leave one out sequences $\bx^{t, (\ell)}$ are defined by running one of the optimization methods \eqref{eq:HB} or \eqref{eq:FG} on \eqref{eq:costloo}. The following lemma ensures that the accelerated methods remain close to the leave-one-out sequences throughout their iterations.
\begin{lem}[Leave-one-out proximity]\label{}
 Suppose that $\ba_j \overset{i.i.d.}{\sim}N(\bzero, \bI) $. Then, for the sequences generated by \eqref{eq:HB} or \eqref{eq:1der}, with probability at least $1-\cO(mn^{-10})$,
$$\max_{1 \leq \ell \leq m} \left\| \begin{bmatrix}
    \bx^{t+1}-\bx^{t+1,(\ell)} \\
    \bx^t -\bx^{t,(\ell)}
\end{bmatrix}\right\|_2\leq C_3 \sqrt{\frac{\log(n)}{n}}.$$
\end{lem}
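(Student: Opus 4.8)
The plan is to prove the leave-one-out proximity bound by induction on $t$, simultaneously with the incoherence and locality properties from the main theorem, treating the pair of consecutive iterates as the fundamental object. The base case $t=0,1$ follows from standard properties of the spectral initialization (and the fact that $\bx^1 = \bx^0$, so $\bx^{1,(\ell)} = \bx^{0,(\ell)}$ as well); here one uses existing concentration results on $\bY$ and $\bY^{(\ell)}$ to show the spectral initializations are close. For the inductive step, I would write the one-step update for the difference $\bx^{t+1} - \bx^{t+1,(\ell)}$ using \eqref{eq:HB} for both sequences:
\begin{equation*}
\begin{bmatrix} \bx^{t+1}-\bx^{t+1,(\ell)} \\ \bx^t - \bx^{t,(\ell)} \end{bmatrix}
= \begin{bmatrix} (1+\beta)\bI - \eta \bH_t & -\beta \bI \\ \bI & \bzero \end{bmatrix}
\begin{bmatrix} \bx^{t}-\bx^{t,(\ell)} \\ \bx^{t-1} - \bx^{t-1,(\ell)} \end{bmatrix} + \eta \begin{bmatrix} \bw_t^{(\ell)} \\ \bzero \end{bmatrix},
\end{equation*}
where $\bH_t$ is an averaged Hessian along the segment between $\bx^t$ and $\bx^{t,(\ell)}$ (so that $\nabla f^{(\ell)}(\bx^t) - \nabla f^{(\ell)}(\bx^{t,(\ell)}) = \bH_t(\bx^t - \bx^{t,(\ell)})$), and $\bw_t^{(\ell)} = \nabla f^{(\ell)}(\bx^t) - \nabla f(\bx^t)$ is the single-term perturbation $\tfrac{1}{m}((\ba_\ell^T\bx^t)^2 - y_\ell)\ba_\ell\ba_\ell^T\bx^t$ coming from the dropped measurement.

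The core estimate is a bound on the operator norm of the $2n\times 2n$ block matrix $\bA_t$ above. Since $\bx^t$ and $\bx^{t,(\ell)}$ both lie in the RIC by the induction hypothesis, the whole segment between them does too (this is the convexity-of-the-region point mentioned in step 2 of the sketch), so $\bH_t$ satisfies $\tfrac12\bI \preceq \bH_t \preceq O(\log n)\bI$ by \eqref{eq:MaLOCINC}. With the choice $\eta \asymp 1/(c\log n)$ and $\beta = \frac{\sqrt{c\log n}-\sqrt{1/2}}{\sqrt{c\log n}+\sqrt{1/2}}$, the same linear-algebra computation underlying \eqref{eq:HBcont} — diagonalizing $\bH_t$ and analyzing the resulting $2\times 2$ blocks — gives $\|\bA_t\| \leq 1 - c'/\sqrt{\log n}$ for the contraction factor $\rho := \frac{\sqrt{L}-\sqrt{\mu}}{\sqrt{L}+\sqrt{\mu}}$ with $L = O(\log n)$, $\mu = 1/2$. (One subtlety: the $2\times 2$ reduction of Polyak's iteration has a complex-eigenvalue regime where $\|\bA_t\|$ exceeds its spectral radius by a bounded factor; I would absorb this either by tracking the appropriate weighted norm as in the standard heavy-ball analysis, or by noting the transient only costs a constant and does not affect the $\sqrt{\log n}$ scaling.) For the perturbation term, the incoherence hypothesis gives $|\ba_\ell^T\bx^t| \lesssim \sqrt{\log n}$ (since $|\ba_\ell^T\bx_*|\lesssim\sqrt{\log n}$ w.h.p.\ and $|\ba_\ell^T(\bx^t-\bx_*)| \lesssim \sqrt{\log n}$ by \eqref{eq:inc}), hence $|(\ba_\ell^T\bx^t)^2 - y_\ell| \lesssim \log n$, and so $\|\bw_t^{(\ell)}\|_2 \lesssim \tfrac{1}{m}\log n\,\|\ba_\ell\|_2 \lesssim \tfrac{\log n}{\sqrt m}\sqrt n \lesssim \sqrt{\log n}/\sqrt{n}$ using $m \gtrsim n\log n$ and $\|\ba_\ell\|_2 \asymp \sqrt n$. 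Then $\eta\|\bw_t^{(\ell)}\|_2 \lesssim \sqrt{\log n}/(\sqrt n\,\log n) = 1/\sqrt{n\log n}$.

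Feeding these into the recursion, if $D_t := \big\|[\bx^t-\bx^{t,(\ell)};\,\bx^{t-1}-\bx^{t-1,(\ell)}]\big\|_2$, then $D_{t+1} \leq \rho\, D_t + \eta\|\bw_t^{(\ell)}\|_2$, which unrolls to $D_{t+1} \leq \rho^t D_1 + \frac{\eta \max_s\|\bw_s^{(\ell)}\|_2}{1-\rho}$. Since $1 - \rho \asymp 1/\sqrt{\log n}$, the steady-state term is $\lesssim \sqrt{\log n}\cdot\frac{1}{\sqrt{n\log n}} = 1/\sqrt n$, which is even smaller than the claimed $C_3\sqrt{\log n/n}$; combined with the base case this closes the induction for $D_{t+1} \leq C_3\sqrt{\log n/n}$, and a union bound over the $m$ values of $\ell$ and over the $O(\mathrm{poly})$ events needed costs only a factor $m$ in the failure probability, giving $1 - \cO(mn^{-10})$. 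The main obstacle is the interlocking nature of the induction: the bound on $\bw_t^{(\ell)}$ requires incoherence of $\bx^t$, which in the full proof is itself established using the very leave-one-out closeness $D_t$ being bounded (via $|\ba_\ell^T(\bx^t-\bx_*)| \leq |\ba_\ell^T(\bx^t - \bx^{t,(\ell)})| + |\ba_\ell^T(\bx^{t,(\ell)}-\bx_*)|$, where the first term is controlled by $\|\ba_\ell\|_2 D_t \lesssim \sqrt n\cdot\sqrt{\log n/n} = \sqrt{\log n}$ and the second by independence of $\ba_\ell$ from $\bx^{t,(\ell)}$). So all the RIC conditions and the proximity bound must be carried as a single joint induction hypothesis and re-established together at each step; the accelerated setting makes this heavier than in \cite{ma2020implicit} because the state is the pair $(\bx^t,\bx^{t-1})$ and every estimate must be phrased for the stacked vector, but no genuinely new difficulty beyond bookkeeping arises once the $2\times 2$ block contraction is in hand.
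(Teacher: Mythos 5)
Your proof follows essentially the same structure as the paper's: write the stacked pair difference as a block recursion driven by a companion-type matrix built from a mean-value Hessian along the segment joining $\bx^t$ and $\bx^{t,(\ell)}$, plus a rank-one perturbation coming from the dropped measurement; show the segment lies in the RIC so the Hessian is sandwiched by $\mu=1/2$ and $L=O(\log n)$; bound the companion matrix by the heavy-ball contraction factor; bound the perturbation by $o(\sqrt{\log n/n})$; and close a joint induction on proximity, locality, and incoherence. Where you differ is the add-and-subtract step: you insert $\eta\nabla f^{(\ell)}(\bx^t)$, producing a Hessian of $f^{(\ell)}$ and a residual $\bw_t^{(\ell)}=\nabla f^{(\ell)}(\bx^t)-\nabla f(\bx^t)=-\tfrac{1}{m}((\ba_\ell^T\bx^t)^2-y_\ell)\ba_\ell\ba_\ell^T\bx^t$ evaluated at $\bx^t$. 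The paper instead inserts $\eta\nabla f(\bx^{t,(\ell)})$, so its residual is $\nabla f(\bx^{t,(\ell)})-\nabla f^{(\ell)}(\bx^{t,(\ell)})$, evaluated at the leave-one-out iterate. That choice matters for how the residual is controlled: the paper leans on the \emph{statistical independence} of $\bx^{t,(\ell)}$ from $\ba_\ell$ to get Gaussian concentration of $\ba_\ell^T(\bx^{t,(\ell)}-\bx_*)$, whereas your version bounds $|\ba_\ell^T\bx^t|$ via the \emph{incoherence hypothesis} on $\bx^t$ itself. Both close the same joint induction, and you correctly observe that the interdependence (incoherence of $\bx^t$ is itself proved from the proximity bound) forces the hypotheses to be carried together; the paper's variant is marginally cleaner because the independence trick avoids one appeal to the inductive incoherence inside the perturbation bound. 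Two small bookkeeping notes: your intermediate line $\|\bw_t^{(\ell)}\|_2 \lesssim \tfrac{1}{m}\log n\,\|\ba_\ell\|_2$ drops the factor $|\ba_\ell^T\bx^t|\lesssim\sqrt{\log n}$, though your stated conclusion $\eta\|\bw_t^{(\ell)}\|\lesssim 1/\sqrt{n\log n}$ is what you get once it is restored; and your caveat about the non-normality of the heavy-ball companion matrix (operator norm can exceed spectral radius in the complex-eigenvalue regime) is a real subtlety that the paper's Lemma~\ref{lem:contraction} treats only by citation to \cite{polyak1964some,polyak1987introduction} --- flagging the need for a weighted norm or a bounded transient factor, as you do, is if anything more careful than the text.
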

In the past analysis of gradient descent, proximity to leave-one-out sequences was essential for proving that the iterates remain in the RIC. Therefore, our analysis can be seen as an extension of these results to the accelerated case, where we show that the accelerated gradient sequences also remain close to the leave-one-out sequences.

\subsection{Discussion}
\label{subsec:disc}

It is important to point out that we only actually need $O(\log n)$ iterations to reach a region where \eqref{eq:loc} implies \eqref{eq:inc}. From that point on, the rates of convergence provided by the methods we studied are not probabilistic. This fact is also used in \cite{ma2020implicit}, although they use a conservative bound on the number of iterations to achieve this fact.

One of the main challenges in the proof of implicit regularization and convergence of the accelerated gradient methods is that these methods do not generally define monotonic sequences for common Lyapunov functions. For example, we do not expect the sequences $(\|\bx^t - \bx_*\|)_{t \in \N}$ or $(f(\bx^t) - f(\bx_*))_{t \in \N}$ to be monotonic. This then makes it hard to guarantee that the sequences of iterates $(\bx^t)$ remain within the RIC. 
Fortunately, we are able to prove an analogous implicit regularization result to that observed for gradient descent in \cite{ma2020implicit} by looking at sequences that consider pairs of consecutive iterates. That is, we are able to prove a contraction result, as well as leave-one-out proximity for the sequence $\left( \begin{bmatrix}
    \bx^{t+1} \\ \bx^t
\end{bmatrix}\right)_{t \in \N}$.

Since the bounds require that pairs of iterates maintain properties rather than individual iterates, the constants in our estimates are worse than those required for GD by a constant absolute factor. This essentially means that our sample complexity is a constant factor worse than that required for GD. On the other hand, our convergence rate is arbitrarily better than that of GD, since we get a better convergence rate by a factor of $\sqrt{\log n}$.  

There are a few limitations of our methodology. First, current results on random initialization and alternating projections separate the dimension from from the $\log(1/\epsilon)$ in the rate, while our rate is $\sqrt{\log n} \log(1/\epsilon)$. Thus, while our results are better than GD with spectral initialization \cite{ma2020implicit}, they are not better than GD with random initialization in the random model \cite{chen2019gradient}. Proving convergence of accelerated methods with random initialization remains an open question \cite{wang2020quickly}. On the other hand, our work opens the door to accelerating in the random setting by showing how to apply leave-one-out analysis to accelerated gradient methods.

\section{Experiments}

While our results are primarily theoretical in nature, we give a few simulations showing that accelerated gradient methods are well-behaved, and actually perform better than GD on simulated phase retrieval tasks.

Our first experiment is displayed in Figure \ref{fig:comp1}. Here, we test GD against the two momentum based methods \eqref{eq:HB} and \eqref{eq:FG}. Here, the sensing vectors satisfy Assumption \ref{assump:sensenormal}, $\bx_*$ is drawn uniformly from $S^{n-1}$, $\eta = 0.05/(\log n)$, and $\beta = \frac{\sqrt{\log n} - \sqrt{2}}{\sqrt{\log n} + \sqrt{2}}$. We vary $n=10, 50, 100$ and $m=200, 500, 1000$. As we can see, where gradient descent converges, the accelerated methods converge faster.

\begin{figure}[ht]
    \centering
    \includegraphics[width=.6\columnwidth]{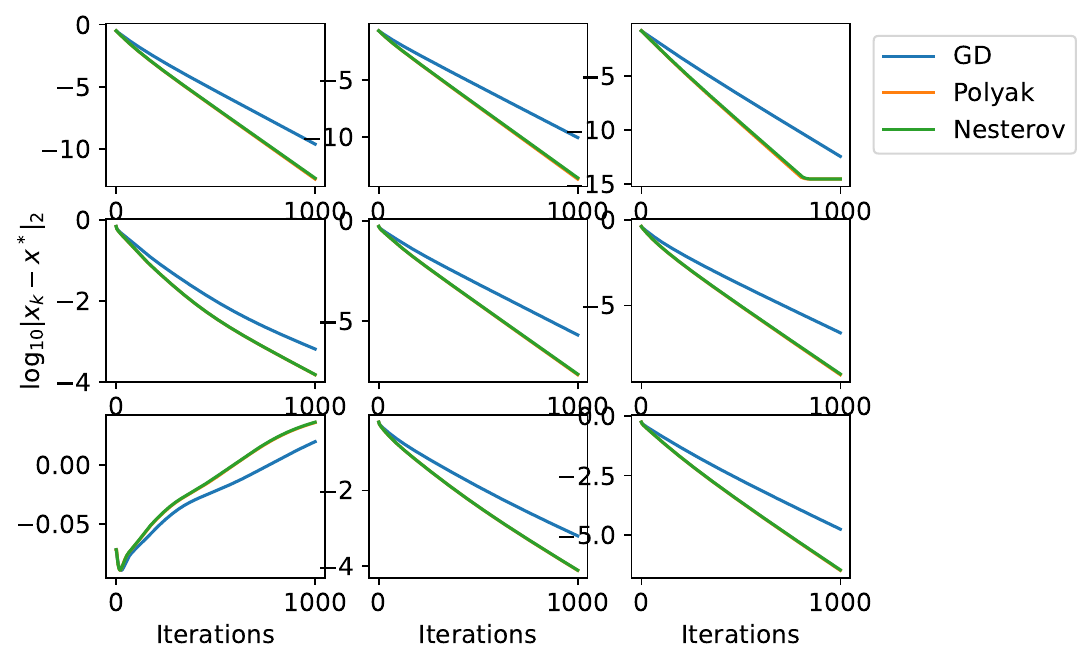}
    \caption{Experiment demonstrating the faster convergence of accelerated methods when compared to GD. Across the rows we vary $m=200, 500, 1000$, and down the columns we vary $n=10, 50, 100$. As we can see, \eqref{eq:HB} and \eqref{eq:FG} have identical performance.}
    \label{fig:comp1}
\end{figure}

In our second experiment, we test GD against the two momentum-based methods \eqref{eq:HB} and \eqref{eq:FG} in the same setup as before, but now the initial vector is randomly chosen from $S^{n-1}$. The results are displayed in Figure \ref{fig:comp12}. As we can see, the accelerated methods still converge faster than GD when it converges. Therefore, we expect that our theoretical results can be extended to this setting as well.

\begin{figure}[ht]
    \centering
    \includegraphics[width=.6\columnwidth]{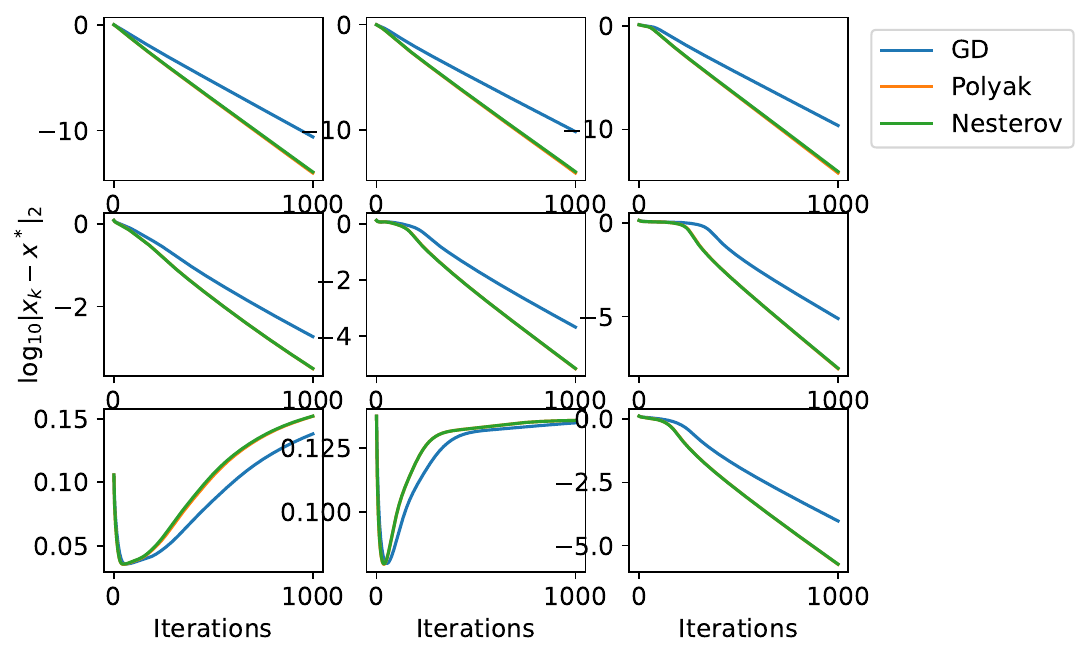}
    \caption{Experiment demonstrating the faster convergence of accelerated methods when compared to GD with random initialization. Across the rows we vary $m=200, 500, 1000$, and down the columns we vary $n=10, 50, 100$. As we can see, \eqref{eq:HB} and \eqref{eq:FG} have identical performance.}
    \label{fig:comp12}
\end{figure}

In our third experiment in Figure \ref{fig:slopes}, we repeat the experiment in Figure \ref{fig:accelex} across a range of dimensions. For each dimension, we compute the slope of the line given by plotting $\log \|\bx^t_{AGD}-\bx_*\|$ against $\log \|\bx^t_{GD}-\bx_*\|$. This slope can be interpreted as the speedup of the accelerated method. With the spectral initialization, we expect the speedup to be proportional to $\sqrt{\log n}$, which we observe in practice. 
\begin{figure}[ht]
    \centering
    \includegraphics[width=0.5\columnwidth, trim=0 0 0 0, clip]{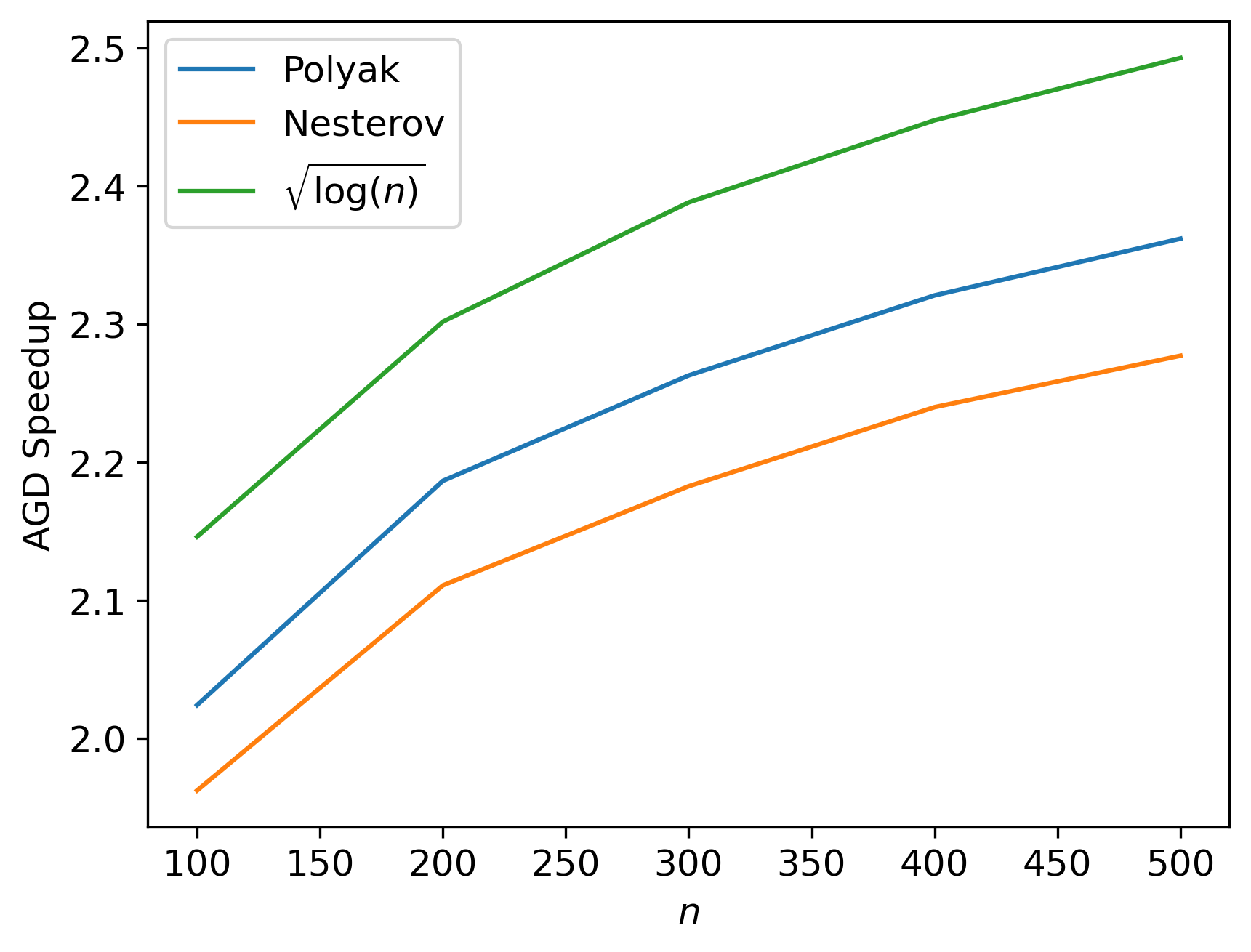}
    \caption{Plot of speedup of accelerated gradient descent versus momentum. Here, the y-axis is the slope of the linear fit of $\log \|\bx^t_{AGD}-\bx_*\|$ versus $\log \|\bx^t_{GD}-\bx_*\|$, and the x-axis is $n$. As we can see, these are well-approximated by the curve $\sqrt{\log n}$.}
    \label{fig:slopes}
\end{figure}

Finally, we compare gradient descent to the accelerated methods on the coded diffraction experiment of Section IV of \cite{candes2015phase}. Here, the vector $\bx_*$ is the Stanford main quad image, and $\ba_j$ are Fourier measurements with random modulation patterns; see the supplementary material or Section IV or \cite{candes2015phase} for details. As we can see, the accelerated methods recover the image much faster than Wirtinger Flow (WF), which is just gradient descent.

\begin{figure}[ht]
    \centering
    \includegraphics[width = 0.5\columnwidth]{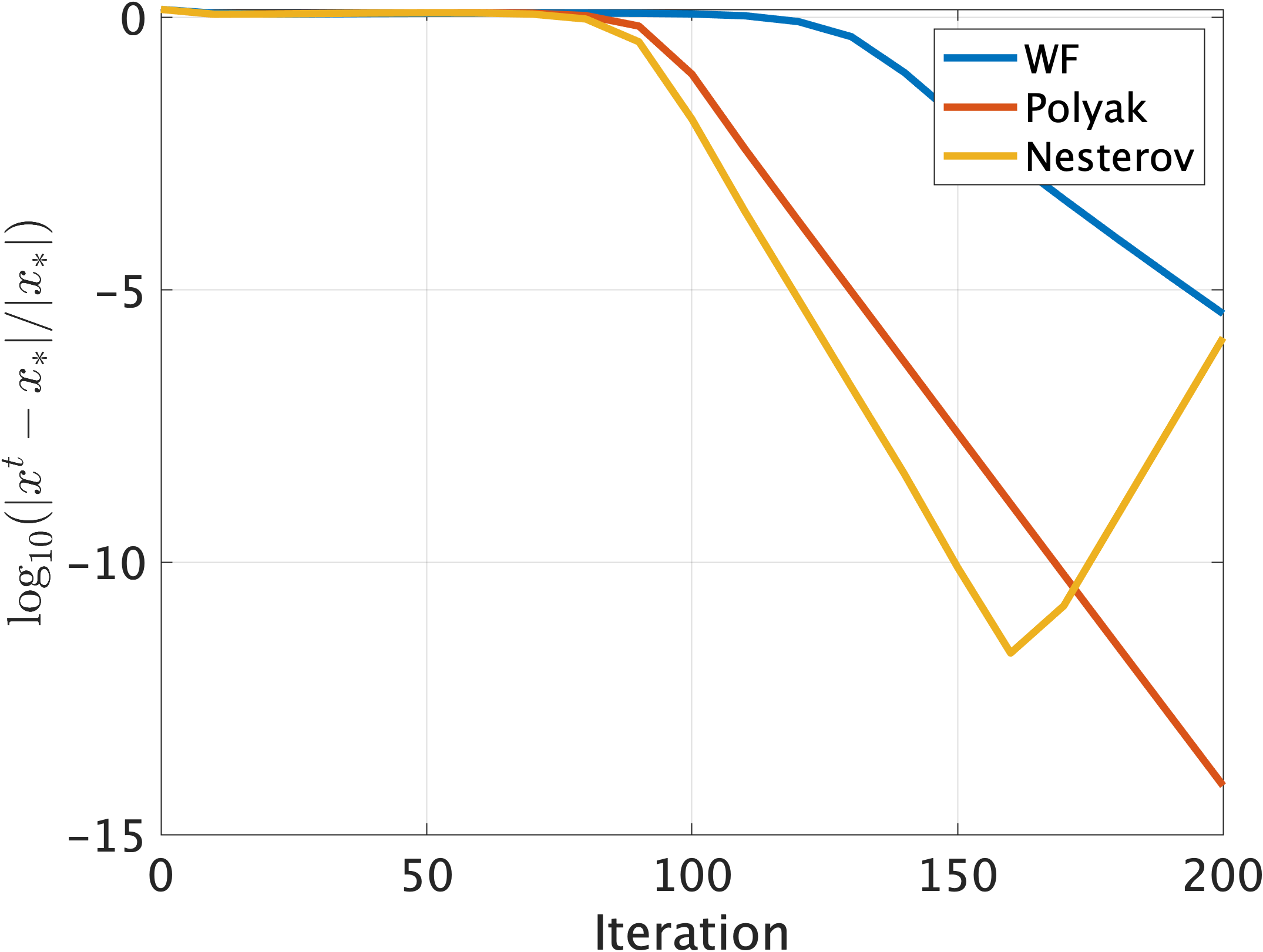}
    \caption{Errors versus iteration for Wirtinger flow as well as Nesterov/Polyak accelerated Wirtinger flow. All methods have the same per-iteration complexity. As we see, the accelerated methods recover the true image much faster.}
    \label{fig:codediffex}
\end{figure}
\begin{figure}[ht]
    \centering
    \includegraphics[width = 0.9\columnwidth]{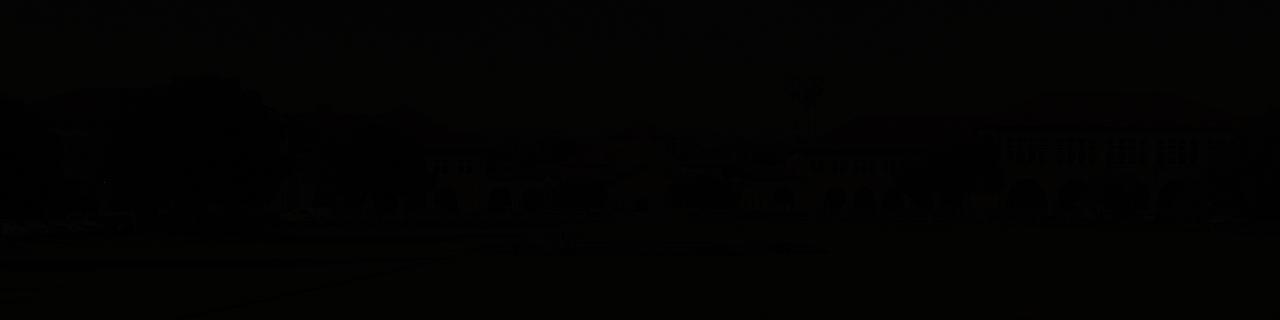}
    \includegraphics[width = 0.9\columnwidth]{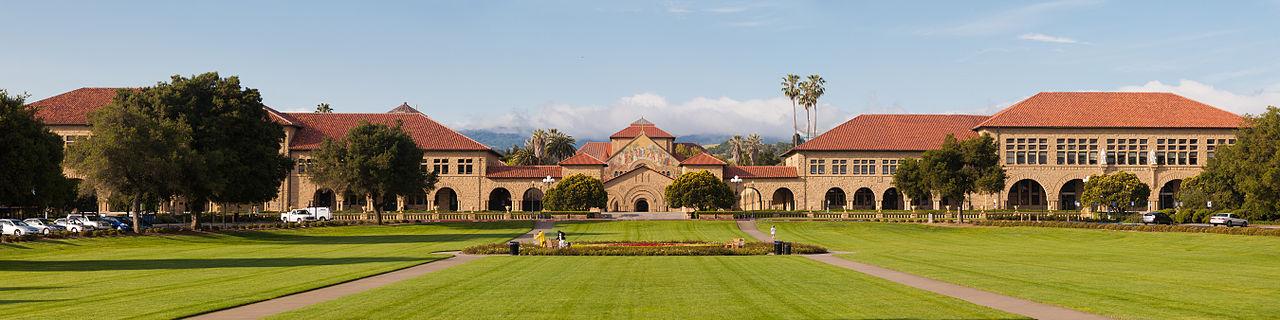}
    \includegraphics[width = 0.9\columnwidth]{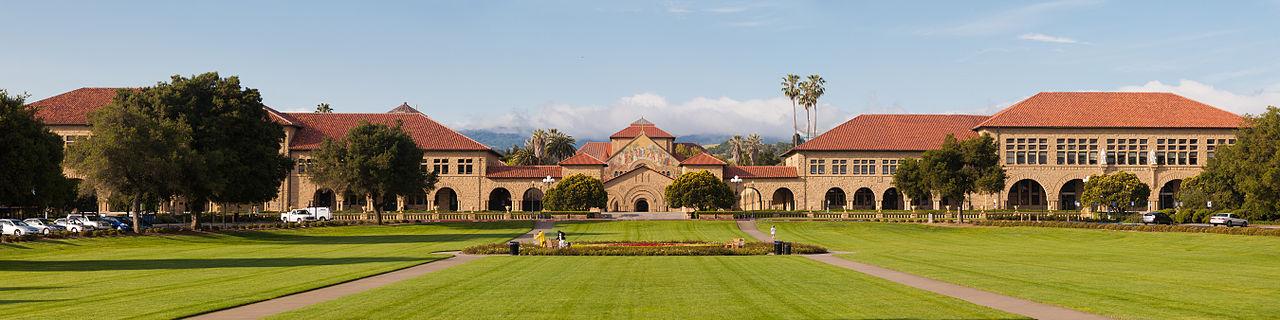}
    \caption{Recovered images after 140 iterations for the accelerated as well as Wirtinger flow algorithm on the coded diffraction test. As we can see, the accelerated methods have already recovered an accurate image before the gradient method even begins to converge. Note that the WF image is black because most pixels are much smaller in value than the maximum.}
    \label{fig:codediffex2}
\end{figure}

\section{Conclusions}

This work presents the first analysis of implicit regularization of accelerated gradient methods on a matrix factorization problem. We consider the problem of Gaussian phase retrieval, where the measurement vectors follow a standard Gaussian distribution. In this case, we are able to show that accelerated gradient iterates remain well-behaved in the sense that they remain within the region of incoherence and contraction. We believe that our results can be generalized to other settings, such as matrix completion and blind deconvolution. 

Many interesting future directions can be pursued. For example, one can consider extending the results to show that accelerated methods converge more quickly than GD for random initialization. Furthermore, one can consider accelerated methods for sparse phase retrieval or generative phase retrieval. This could be done by applying ideas from nonsmooth optimization \cite{beck2009fast} or accelerating smooth formulations with other implicit regularization \cite{wu2021hadamard}.

\section{Acknowledgements}

T. Maunu acknowledges the support of the NSF under
Award No. 2305315.

\bibliography{refs}
\bibliographystyle{plainnat}

\newpage

\onecolumn

\appendix
\section{Proof of Main Theorems}

Recall that, by Assumption \ref{assump:sensenormal}, the sensing vectors $\{\ba_i\}$, $i=1,\ldots,m$, are sampled i.i.d from a normal distribution $\cN(\boldsymbol{0},\bI)$. In the proofs in this section, we use the following results that come from standard concentration inequalities for normal distributions, see \cite[Appendix A]{ma2020implicit}:
\begin{enumerate}
    \item With probability at least $1-\cO(me^{-3n/2})$:
    \begin{equation}
    \label{eq:concnorm}
        \max_{1 \leq i \leq m} \| \ba_i \|_2 \leq \sqrt{6n}
    \end{equation}
    \item With probability at least $1-\cO(mn^{-10})$, for any $\by \in \mathbb{R}^n$:
    \begin{equation}
    \label{eq:concproj}
        \max_{1\leq i \leq m} |\ba_i^T \by| \leq 5 \sqrt{\log(n)} \| \by \|_2
    \end{equation}
\end{enumerate}

\subsection{Proof of Theorem \ref{thm:HB} for Polyak's Heavy Ball method}
\label{app:HBpf}

\begin{proof}[Proof of Theorem \ref{thm:HB} for Polyak's Heavy Ball Method \eqref{eq:HB}]
By repeated application of Lemma \ref{lem:induction}, for any $t \leq n$, we have that $\bx^t$ is in the RIC with probability at least $(1-\cO(mn^{-10}))^{n} \geq 1- \cO(mn^{-9})$. Moreover, by Lemma \ref{lem:contraction} we also have that
\begin{align}
    \left\|
    \begin{bmatrix}
    \bx^{t+1} - \bx_* \\
    \bx^t - \bx_*
    \end{bmatrix}
\right\|_2 \leq & \left(\frac{\sqrt{L}-\sqrt{\mu}}{\sqrt{L}+\sqrt{\mu}} \right)^t 
\left\|
    \begin{bmatrix}
    \bx^{1} - \bx_* \\
    \bx^0 - \bx_*
    \end{bmatrix}
\right\|_2 \nonumber \\
    \leq & \left(1-\frac{2\sqrt{\mu}}{\sqrt{L}+\sqrt{\mu}} \right)^t C_1.
    \label{eq:contHBproof}
\end{align}
In particular, for $T_0=n$, we obtain
\begin{align}
\label{eq:T0bound}
    \left\|
    \begin{bmatrix}
    \bx^{T_0} - \bx_* \\
    \bx^{T_0-1} - \bx_*
    \end{bmatrix}
\right\|_2
     & \leq \left(1-\frac{2\sqrt{\mu}}{\sqrt{L}+\sqrt{\mu}} \right)^{T_0} C_1 \\ & \leq \frac{1}{n} C_1 ,
\end{align}
as long as $n \geq \displaystyle  \frac{-\log(n)}{\log \left(1-\frac{2\sqrt{1/2}}{\sqrt{C \log (n)}+\sqrt{1/2}}\right)}$. Furthermore, the choice $t \geq n$ then implies that $\displaystyle \left(1-\frac{2 \sqrt{1/2}}{\sqrt{C \log(n)}+\sqrt{1/2}} \right)^t \leq \frac{1}{n}$. 
Therefore, as long as the iterates remain in the RIC for the first $T_0$ iterations, we get contraction to a neighborhood where $\dist(\bx^{T_0}, \bx_*) \leq C_1 / n = O(1/n)$. 

After this, for $t \geq T_0$, locality implies incoherence, i.e., condition \eqref{eq:loc} implies \eqref{eq:inc}. Indeed, for $t = T_0$, by Cauchy-Schwartz inequality, concentration inequality \eqref{eq:concnorm}, and inequality \eqref{eq:T0bound} we obtain:
\begin{align}
    &\max_{1 \leq \ell \leq m} |\ba_{\ell}^T(\bx^{T_0}-\bx_*)| \nonumber \\ & \leq \sqrt{6n} \|\bx^{T_0}-\bx_* \|\leq \sqrt{6n} \frac{1}{n} C_1 \nonumber \\& \lesssim C_2 \sqrt{\log(n)}. \label{eq:T0inco}
\end{align}
Now, because $\bx^{T_0}$ is in the RIC, we can use Lemma \ref{lem:contraction} to get that
\begin{align}
     \left\|
    \begin{bmatrix}
    \bx^{T_0+1} - \bx_* \\
    \bx^{T_0} - \bx_*
    \end{bmatrix}
\right\|_2 & \leq \left(1 - \frac{2 \sqrt{\mu}}{\sqrt{L}+\sqrt{\mu}} \right) \left\|
    \begin{bmatrix}
    \bx^{T_0} - \bx_* \\
    \bx^{T_0-1} - \bx_*
    \end{bmatrix}
\right\|_2 \nonumber \\
    &\leq \frac{1}{n} C_1 \label{T0plusbound}
\end{align}
Following the same argument that led to \eqref{eq:T0inco} and \eqref{T0plusbound}, we can see that locality implies incoherence for iterates with $t \geq T_0$.  We have just shown that the assumptions of Lemma \ref{lem:contraction} are satisfied for all $t \geq T_0=n$, which implies that
\begin{equation}
    \left\|
    \begin{bmatrix}
    \bx^{t+1} - \bx_* \\
    \bx^t - \bx_*
    \end{bmatrix}
\right\|_2 \leq \left( 1- \frac{2 \sqrt{\mu}}{\sqrt{L}+\sqrt{\mu}} \right)^t C_1
\end{equation}
\end{proof}

The previous proofs rely on the following three lemmas. We state the first without proof, since it was shown in \cite{ma2020implicit}. This lemma gives high probability bounds on the Hessian seen in \eqref{eq:MaLOCINC} provided that the point $\bx$ is in the RIC.
\begin{lem}[Lemma 1 of \citet{ma2020implicit}]\label{lem:scsm}
For sufficiently small $C_1$, sufficiently large $C_2$, and $m \geq c_0 n \log n$ with sufficiently large $c_0$, with probability at least $1-\cO(m n^{-10})$, one has
\begin{equation}
    \nabla^2 f (\bx) \preceq (5 C_2(10+C_2) \log (n)) \bI_n
\end{equation}
for all $\bx \in \mathbb{R}^n$ for which $\|\bx -\bx_* \|_2  \leq 2 C_1$, and
\begin{equation}
    \nabla^2 f(\bx) \succeq \frac{1}{2} \bI_n
\end{equation}
for all $\bx \in \mathbb{R}^n$ such that
\begin{align}
\label{eq:RIC}
    \|\bx -\bx_* \|_2 & \leq  2 C_1  \\ \nonumber
    \max_{i} |\ba_i^T(\bx-\bx_*)| &\leq C_2 \sqrt{\log (n)} \nonumber 
\end{align}
\end{lem}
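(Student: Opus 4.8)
The two inequalities are equivalent to two‑sided control of the random quadratic form
\[
 \bz^T \nabla^2 f(\bx)\,\bz = \frac1m\sum_{i=1}^m \big(3(\ba_i^T\bx)^2-(\ba_i^T\bx_*)^2\big)(\ba_i^T\bz)^2 ,
\]
uniformly over unit $\bz$ and over $\bx$ in the stated region. Writing $\bx=\bx_*+\bh$ and expanding, $3(\ba_i^T\bx)^2-(\ba_i^T\bx_*)^2 = 2(\ba_i^T\bx_*)^2+6(\ba_i^T\bx_*)(\ba_i^T\bh)+3(\ba_i^T\bh)^2$, so $\nabla^2 f(\bx)=\bT_1+\bT_2+\bT_3$ with $\bT_1=\frac2m\sum_i(\ba_i^T\bx_*)^2\ba_i\ba_i^T$ (a function of the fixed vector $\bx_*$ alone), $\bT_2=\frac6m\sum_i(\ba_i^T\bx_*)(\ba_i^T\bh)\ba_i\ba_i^T$, and $\bT_3=\frac3m\sum_i(\ba_i^T\bh)^2\ba_i\ba_i^T\succeq\bzero$. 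A direct Gaussian moment computation gives
\[
 \E\nabla^2 f(\bx)=\big(2+6\bx_*^T\bh+3\|\bh\|^2\big)\bI+4\bx_*\bx_*^T+6\big(\bx_*\bh^T+\bh\bx_*^T\big)+6\bh\bh^T ,
\]
which, since $\|\bx_*\|=1$ and $\|\bh\|\le 2C_1$, satisfies $\tfrac32\bI\preceq\E\nabla^2 f(\bx)\preceq 10\bI$ once $C_1$ is a sufficiently small absolute constant. The plan is to bound the bias using this formula and the fluctuations $\bT_j-\E\bT_j$ by truncation, matrix concentration, and $\epsilon$‑nets, with the incoherence condition \eqref{eq:inc} supplying the per‑term bounds that keep $\bT_2-\E\bT_2$ under control.

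\emph{Upper bound.} This direction is soft. Discarding the nonpositive term $-(\ba_i^T\bx_*)^2$ and using $(\ba_i^T\bx)^2\le 2(\ba_i^T\bx_*)^2+2(\ba_i^T\bh)^2$,
\[
 \nabla^2 f(\bx)\preceq \frac3m\sum_i(\ba_i^T\bx)^2\ba_i\ba_i^T \preceq 6\Big(\max_i(\ba_i^T\bx_*)^2+\max_i(\ba_i^T\bh)^2\Big)\,\big\|\tfrac1m\sum_i\ba_i\ba_i^T\big\|\,\bI .
\]
Here \eqref{eq:concproj} applied to the fixed vector $\bx_*$ gives $\max_i(\ba_i^T\bx_*)^2\le 25\log n$, the incoherence condition \eqref{eq:inc} gives $\max_i(\ba_i^T\bh)^2\le C_2^2\log n$, and a standard covariance bound (valid once $m\gtrsim n$) gives $\|\frac1m\sum_i\ba_i\ba_i^T\|\le 2$, so $\nabla^2 f(\bx)\preceq 12(25+C_2^2)(\log n)\,\bI$, of the claimed form. (Incoherence is used here as well, which is why the constant depends on $C_2$.)

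\emph{Lower bound.} Since $\bT_3\succeq\bzero$ it suffices to bound $\bT_1+\bT_2$ from below. From the moment formula, $\E\bT_1=2\bI+4\bx_*\bx_*^T\succeq 2\bI$ and $\|\E\bT_2\|\le 6(|\bx_*^T\bh|+2\|\bh\|)\le 36C_1$, so $\E\bT_1+\E\bT_2\succeq(2-36C_1)\bI$, and it remains to show that $\|\bT_1-\E\bT_1\|$ and $\|\bT_2-\E\bT_2\|$ are each at most, say, $\tfrac18$ — uniformly over $\bh$ with $\|\bh\|\le 2C_1$ and $\max_i|\ba_i^T\bh|\le C_2\sqrt{\log n}$; then $\nabla^2 f(\bx)\succeq(2-36C_1-\tfrac14)\bI\succeq\tfrac12\bI$ for $C_1$ small. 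The bound on $\|\bT_1-\E\bT_1\|$ is a single‑vector concentration statement: truncate the summands onto the event $\{\|\ba_i\|_2^2\le 6n,\ (\ba_i^T\bx_*)^2\le 25\log n\}$ — which by \eqref{eq:concnorm} and \eqref{eq:concproj} holds for every $i$ with the required probability and whose complement carries only exponentially small Gaussian mass — bound the discarded atoms deterministically, and apply matrix Bernstein; this is exactly the kind of estimate carried out in \cite[Appendix A]{ma2020implicit}, and $m\gtrsim n\log n$ is what it needs. The bound on $\|\bT_2-\E\bT_2\|$, uniform over admissible $\bh$, is the technical heart: for fixed such $\bh$ and fixed unit $\bz$, on the truncation event the incoherence constraint gives $|\ba_i^T\bx_*|\,|\ba_i^T\bh|\le 5C_2\log n$, which keeps each centered summand of $\bz^T\bT_2\bz$ of controlled size and lets a matrix concentration inequality give $\|\bT_2-\E\bT_2\|\le\tfrac18$ for that pair with a small enough failure probability; one then union‑bounds over $\epsilon$‑nets of the unit sphere (for $\bz$) and of the ball $\{\|\bh\|\le 2C_1\}$ (for $\bh$), and fills in between net points using that $(\bz,\bh)\mapsto\bz^T\bT_2\bz$ is $\mathrm{poly}(n)$‑Lipschitz on the truncation event, choosing the net resolution $\mathrm{poly}(1/n)$ so that the overall failure probability stays $\cO(mn^{-10})$. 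Net points $\bh'$ for $\bh$ are automatically near‑incoherent since $|\ba_i^T(\bh-\bh')|\le\sqrt{6n}\,\|\bh-\bh'\|$.

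The step I expect to be the main obstacle is this uniform control of the fluctuation of $\bT_2$ over the whole RIC region at sample size $m\asymp n\log n$. Naive per‑summand estimates give only $\|\nabla^2 f(\bx)-\E\nabla^2 f(\bx)\|=O(\log n)$, far too weak for a $\tfrac12\bI$ lower bound, so the truncation‑plus‑net concentration argument is unavoidable; and it is precisely the incoherence constraint $\max_i|\ba_i^T\bh|\le C_2\sqrt{\log n}$ that replaces the useless bound $|\ba_i^T\bh|\le\sqrt{6n}\,\|\bh\|$ and keeps the truncated summands of $\bT_2$ of size $O(n\log n)$ rather than $O(n^2)$, which is what allows $m\asymp n\log n$ (rather than $m\asymp n^2$) to suffice. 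The upper bound and the bias computations, by contrast, are routine given the deterministic events \eqref{eq:concnorm}, \eqref{eq:concproj} and the incoherence condition.
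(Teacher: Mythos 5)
The paper does not actually prove this lemma; it states it as Lemma~1 of \citet{ma2020implicit} and explicitly defers to that reference, so there is no in-paper proof to compare against --- only the cited argument. Your Taylor decomposition $\nabla^2 f=\bT_1+\bT_2+\bT_3$, the Gaussian moment computation for $\E\nabla^2 f(\bx)$, and the observation that $\bT_3\succeq\bzero$ lets you drop it for the lower bound are all correct and clean. You are also right that the upper bound genuinely uses incoherence despite the lemma statement in the present paper appearing to impose only locality for that half: the $C_2$-dependence of the constant $5C_2(10+C_2)\log n$ already betrays this, and a vector $\bh$ aligned with some $\ba_j$ would give $\|\nabla^2 f(\bx)\|\gtrsim n/\log n$ at $m\asymp n\log n$, so locality alone cannot suffice.

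The genuine gap is in the step you yourself flag as the ``main obstacle'': the uniform control of $\bT_2-\E\bT_2$ over the admissible ball of $\bh$ by a single pass of ``truncate, apply (matrix) Bernstein, union bound over a $\mathrm{poly}(1/n)$-resolution net.'' At $m=c_0\,n\log n$ the truncated per-summand bound on $\frac{6}{m}(\ba_i^T\bx_*)(\ba_i^T\bh)\ba_i\ba_i^T$ is $R\asymp n\log n/m\asymp 1/c_0$, a fixed constant, while the variance proxy is $\sigma^2\lesssim 1/\log n$ (matrix) or $1/(n\log n)$ (scalar form). Bernstein at deviation $t=1/8$ therefore yields a per-point failure probability of order $\exp(-\Theta(\min(c_0,\log n)))$ --- a quantity that is \emph{not} exponentially small in $n$. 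Meanwhile a $\mathrm{poly}(1/n)$-net of $\{\|\bh\|\le 2C_1\}$ (needed because the Lipschitz constant of $\bh\mapsto\bz^T\bT_2\bz$ on the truncation event is of order $\sqrt{n\log n}$) has $\exp(\Theta(n\log n))$ points, and the $\bz$-sphere net another $\exp(\Theta(n))$. The union bound fails by an exponential margin, and raising $c_0$ cannot fix it without pushing $m$ to $\gtrsim n^2\log n$, the regime you are trying to avoid. The cited proof in \citet{ma2020implicit} does not close this step by a fixed-truncation-plus-net Bernstein; it invokes specialized \emph{uniform} fourth-moment concentration lemmas (in the spirit of Lemma~7.7 of Cand\`es--Li--Soltanolkotabi, which truncate adaptively inside the sum, work with the quadratic form directly, and use a different deviation mechanism to obtain uniformity over all pairs of directions at $m\asymp n\log n$). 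So your blueprint correctly isolates where incoherence earns its keep --- shrinking the per-summand size from $O(n^{3/2}\sqrt{\log n}/m)$ to $O(n\log n/m)$ --- but the uniformity-over-$\bh$ step as written does not go through and must be replaced by one of these finer uniform concentration tools.
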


In the following proof of Theorem \ref{thm:HB}, as a short hand, we use $\mu = 1/2$ and $L = c \log n$ as lower and upper bounds on the Hessian by Lemma \ref{lem:scsm}.

Next, using the previous lemma, we show that if the current pair of consecutive iterates lie within the RIC, then they get closer to $\bx_*$. 
\begin{lem}[Contraction]
\label{lem:contraction}
If $\ba_j \overset{i.i.d.}{\sim}N(\bzero, \bI)$,  then with probability at least $1-\mathcal{O}(mn^{-10})$, then the iterates \eqref{eq:HB}
\begin{equation}
\left\|
    \begin{bmatrix}
    \bx^{t+1} - \bx_* \\
    \bx^t - \bx_*
    \end{bmatrix}
\right\|_2    
    \leq \left( \frac{\sqrt{L}-\sqrt{\mu}}{\sqrt{L}+\sqrt{\mu}} \right)
    \left \|
    \begin{bmatrix}
        \bx^t-\bx_* \\
        \bx^{t-1}-\bx_*
    \end{bmatrix}
    \right\|_2
\end{equation}
when $\bx^t, \bx^{t-1}$ satisfy conditions \eqref{eq:RIC}, $\eta \lesssim \frac{4}{(\sqrt{1/2}+\sqrt{\log(n)})^2}$, and $\beta = \frac{\sqrt{c\log(n)}-\sqrt{1/2}}{\sqrt{c\log(n)}+\sqrt{1/2}}$.
\end{lem}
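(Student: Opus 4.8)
The plan is to reduce the analysis to the classical linear-recursion argument for Polyak's heavy ball on a strongly convex, smooth function, using the fact that under the RIC conditions \eqref{eq:RIC} the Hessian of $f$ is pinched between $\mu \bI$ and $L\bI$ with $\mu = 1/2$, $L = c\log n$, by Lemma \ref{lem:scsm}. First I would write the heavy ball update \eqref{eq:HB} in error coordinates $\be^t := \bx^t - \bx_*$. Using $\nabla f(\bx_*) = \bzero$ (since $\bx_*$ is a global minimizer of $f$) and the fundamental theorem of calculus, I would write $\nabla f(\bx^t) = \nabla f(\bx^t) - \nabla f(\bx_*) = \bH^t \be^t$, where $\bH^t := \int_0^1 \nabla^2 f(\bx_* + s(\bx^t - \bx_*))\,\di s$ is the averaged Hessian along the segment $[\bx_*, \bx^t]$. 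The key observation — and this is exactly what the induction in the main proof sketch guarantees (item 2 of the sketch) — is that the entire segment $[\bx_*,\bx^t]$ lies in the RIC whenever $\bx^t$ does: locality \eqref{eq:loc} is convex in $\bx$, and incoherence \eqref{eq:inc} with bound $C_2\sqrt{\log n}$ is preserved along the segment because $|\ba_\ell^T(\bx_* + s(\bx^t-\bx_*) - \bx_*)| = s\,|\ba_\ell^T(\bx^t - \bx_*)| \le |\ba_\ell^T(\bx^t-\bx_*)|$. Hence Lemma \ref{lem:scsm} gives $\mu \bI \preceq \bH^t \preceq L \bI$ with probability at least $1 - \cO(mn^{-10})$.

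Next I would assemble the two-step system. The heavy ball recursion becomes
\begin{equation}
\begin{bmatrix} \be^{t+1} \\ \be^t \end{bmatrix}
= \begin{bmatrix} (1+\beta)\bI - \eta \bH^t & -\beta \bI \\ \bI & \bzero \end{bmatrix}
\begin{bmatrix} \be^t \\ \be^{t-1} \end{bmatrix} =: \bT^t \begin{bmatrix} \be^t \\ \be^{t-1} \end{bmatrix}.
\end{equation}
I would then diagonalize $\bH^t = \bQ \bLambda \bQ^T$ with $\bLambda = \diag(\lambda_1,\dots,\lambda_n)$, $\lambda_j \in [\mu, L]$, which block-diagonalizes $\bT^t$ (after the orthogonal change of basis $\diag(\bQ,\bQ)$) into $n$ decoupled $2\times 2$ blocks $\bT_j := \begin{bmatrix} 1+\beta - \eta\lambda_j & -\beta \\ 1 & 0\end{bmatrix}$. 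The operator norm of the full $2n\times 2n$ matrix is the maximum over $j$ of the operator norms of the $\bT_j$, so the bound reduces to a scalar optimization over $\lambda \in [\mu, L]$. With the standard choices $\eta = \frac{4}{(\sqrt{\mu}+\sqrt{L})^2}$ and $\beta = \frac{\sqrt{L}-\sqrt{\mu}}{\sqrt{L}+\sqrt{\mu}} = \left(\frac{\sqrt{L/\mu}-1}{\sqrt{L/\mu}+1}\right)$, the eigenvalues of each $\bT_j$ are complex with modulus exactly $\sqrt{\beta}$ at the endpoints and the whole family is (marginally) robustly stable; the classical heavy-ball computation (Polyak's proof, as cited in Theorem \ref{thm:convcompare}) yields $\|\bT_j\| \le \frac{\sqrt{L}-\sqrt{\mu}}{\sqrt{L}+\sqrt{\mu}}$ up to an absolute constant factor, which is why the statement of the lemma allows $\eta \lesssim \frac{4}{(\sqrt{1/2}+\sqrt{\log n})^2}$ rather than exact equality. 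I would carry this out either by the explicit eigenvalue/Jordan computation or by exhibiting a fixed positive-definite quadratic Lyapunov form $\bP$ (independent of $t$ and $\lambda$) with $\bT_j^T \bP \bT_j \preceq \rho^2 \bP$, $\rho = \frac{\sqrt{L}-\sqrt{\mu}}{\sqrt{L}+\sqrt{\mu}}$; the latter is cleaner because it avoids the non-normality issue head-on.

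The main obstacle is precisely that the iteration matrix $\bT^t$ is \emph{not symmetric} (not even normal), so $\|\bT^t\|_2$ is genuinely larger than its spectral radius $\sqrt{\beta}$, and a naive eigenvalue bound does not directly control the Euclidean norm of the pair $[\be^{t+1}; \be^t]$. The resolution — and the reason the lemma is stated with $\lesssim$ and why the paper later notes the constants are "worse than those required for GD by a constant absolute factor" — is to either absorb a bounded non-normality constant into the $\lesssim$, or to switch to the norm induced by the Lyapunov matrix $\bP$ and note that $\bP$ has condition number bounded by an absolute constant, so the two norms are equivalent up to that constant. A secondary technical point is that $\bH^t$ changes with $t$, so a single fixed $\bP$ must work simultaneously for all $\bH$ with spectrum in $[\mu,L]$; this is exactly the robust-stability statement, and it holds for the heavy-ball parameters above. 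Once the contraction factor for one step is established, the displayed inequality of the lemma follows immediately, and its iteration (as in \eqref{eq:contHBproof}) completes the quantitative convergence claim used in the proof of Theorem \ref{thm:HB}.
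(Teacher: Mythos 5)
Your overall approach is the same as the paper's: replace $\nabla f(\bx^t)$ by an averaged Hessian $\bH^t$ along $[\bx_*,\bx^t]$ (the paper phrases this as a mean-value $\bxi^t$; your integral form is the correct rigorous version), show the segment stays in the RIC so Lemma~\ref{lem:scsm} gives $\mu\bI\preceq\bH^t\preceq L\bI$, write the two-step recursion with the block companion matrix $\bM(t)$, and bound it by block-diagonalizing via the eigenbasis of $\bH^t$ into $n$ scalar $2\times2$ companion blocks. The paper then says simply that one can ``bound the norm of the matrix factor \ldots by bounding its eigenvalues,'' citing Polyak, and stops there.

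You have put your finger on a real gap, and you should be aware that it is a gap in the paper's proof, not merely in yours. The $2\times2$ block $\bT_j = \begin{bmatrix}1+\beta-\eta\lambda_j & -\beta\\ 1 & 0\end{bmatrix}$ is not normal; its eigenvalues have modulus $\sqrt{\beta}<1$, but applying $\bT_j$ to the unit vector $[1,0]^T$ yields $[1+\beta-\eta\lambda_j,\,1]^T$, whose Euclidean norm is $\geq 1$ always, so $\|\bT_j\|_2\geq 1$. Consequently the one-step Euclidean contraction
\[
\left\|\begin{bmatrix}\be^{t+1}\\\be^t\end{bmatrix}\right\|_2 \leq \frac{\sqrt L-\sqrt\mu}{\sqrt L+\sqrt\mu}\left\|\begin{bmatrix}\be^t\\\be^{t-1}\end{bmatrix}\right\|_2
\]
as displayed in the lemma cannot hold: the spectral radius of $\bM(t)$ is strictly less than the operator norm here. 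Bounding eigenvalues controls $\rho(\bM(t))$, not $\|\bM(t)\|_2$, and since $\bM(t)$ changes with $t$ one cannot even use Gelfand's formula on powers of a single matrix. One small correction to your reading: the lemma's conclusion uses $\leq$ with the explicit factor, not $\lesssim$; the $\lesssim$ appears only in the step-size hypothesis, so there is no slack in the stated conclusion to absorb a non-normality constant.

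Your proposed repairs are the right ones, but both change the statement. Either (i) exhibit a fixed positive-definite $\bP$ with $\bT_j^T\bP\bT_j\preceq\rho^2\bP$ uniformly over $\lambda\in[\mu,L]$ and prove the contraction in the $\bP$-norm, carrying the (bounded) condition number of $\bP$ as an extra absolute constant through the induction (this constant then shows up in the leave-one-out Lemma~\ref{lem:loo}, in the induction Lemma~\ref{lem:induction}, and in \eqref{eq:contHBproof}, where it degrades the displayed $\leq C_1$ bounds and requires re-checking that the RIC inclusion still closes); or (ii) prove a $k$-step Euclidean contraction $\|\bM(t+k-1)\cdots\bM(t)\|_2\leq C\rho^k$ for a fixed small $k$ and absolute $C$, at the cost of restating the induction in windows of $k$ steps. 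Either way is workable, but you cannot get the lemma exactly as written by eigenvalue bounds alone, and the paper's ``as is usually done in the heavy ball method'' glosses over exactly the point you identified.
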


\begin{proof}[Proof of Lemma 2]
We can write one step of the heavy ball method as
\begin{align}
    \begin{bmatrix}
        \bx^{t+1} - \bx_* \\
        \bx^t - \bx_*
    \end{bmatrix}
    &=
    \begin{bmatrix}
        \bx^t - \bx_* -\eta \nabla f(\bx^t) + \beta (\bx^t-\bx^{t-1}) \\
        \bx^t- \bx_*
    \end{bmatrix} \nonumber \\ 
    &= 
    \bM(t)
    \begin{bmatrix}
        \bx^t - \bx_* \\
        \bx^{t-1} - \bx_*
    \end{bmatrix}
\end{align}
where $\boldsymbol{\xi}^t \in [\bx_*,\bx^t]$, the segment joining $\bx_*$ and $\bx^t$. Here we have made
\begin{equation}
 \label{eq:hbmatrix}
\bM(t)=
    \begin{bmatrix}
        (1+\beta) \bI - \nabla^2 f(\boldsymbol{\xi}^t) & -\beta \bI \\
        \bI & \boldsymbol{0}
    \end{bmatrix}
\end{equation}

A direct analysis shows that $\boldsymbol{\xi}^t$ is in the region of RIC.
Indeed, if $\boldsymbol{\xi}^t=(1-\tau) \bx_*+ \tau \bx^t$ for some $\tau \in (0,1)$ then $$\| \boldsymbol{\xi}^t - \bx_*\|_2 = \tau \| \bx^t - \bx_*\|_2 \leq 2 C_1$$ and $$\max_{i} |\ba_i^T(\boldsymbol{\xi}-\bx_*)| = \tau \max_{i}|\ba_i^t (\bx^t-\bx_*)| \leq C_2 \sqrt{\log n}.$$ Therefore, as $f$ is strongly convex and smooth, we can bound the norm of the matrix factor \eqref{eq:hbmatrix} as is usually done in the heavy ball method, by bounding its eigenvalues (see for instance \cite{polyak1964some,polyak1987introduction}).

Bounding the eigenvalues in this way yields the contraction
\begin{equation}
\left\|
    \begin{bmatrix}
    \bx^{t+1} - \bx_* \\
    \bx^t - \bx_*
    \end{bmatrix}
\right\|_2    
    \leq \left( \frac{\sqrt{L}-\sqrt{\mu}}{\sqrt{L}+\sqrt{\mu}} \right)
    \left \|
    \begin{bmatrix}
        \bx^t-x_* \\
        \bx^{t-1}-x_*
    \end{bmatrix}
    \right\|_2
\end{equation}
\end{proof}

We now put the previous two lemmas together to prove the main induction. This lemma ensures that pairs of consecutive iterates maintain the contraction and incoherence properties necessary to remain in the RIC. Note that the proof of this lemma relies on a further Lemma \ref{lem:loo}, where we show that the iterates \eqref{eq:HB} stay close to leave-one-out sequences.
\begin{lem}[Induction]
\label{lem:induction}
Suppose that we initialize \eqref{eq:HB} with $\bx^0=\sqrt{\frac{\lambda_1(\bY)}{3}} \widetilde{\bx}^0$, where $\lambda_1(\bY)$ and $\bx^0$ are the leading eigenvalue and eigenvector of $\bY=\frac{1}{m} \sum_{i=1}^{m} y_i \ba_i \ba_i^T $. Suppose further that $\ba_j \overset{i.i.d.}{\sim}N(\bzero, \bI)$. Then, for $n$ sufficiently large, with probability at least $1-\cO(mn^{-10})$,
\begin{align}\label{eq:induct_loc}
    \|\bx^{t+1}-\bx_* \|_2^2 + \|\bx^{t}-\bx_* \|_2^2 & \leq C_1^2 \\
    \max_{1 \leq i \leq m} |\ba_i^T(\bx^{t+1}-\bx_*)| &\leq C_2 \sqrt{\log(n)}
\end{align}
\end{lem}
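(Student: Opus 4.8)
The plan is to prove \eqref{eq:induct_loc} by induction on $t$, carrying a strengthened hypothesis that bundles the locality bound, the incoherence bound, and the leave-one-out proximity of Lemma \ref{lem:loo} for the pair $(\bx^{t+1},\bx^t)$. The base case $t=0$ (together with $t=1$, since $\bx^1=\bx^0$) is exactly the spectral-initialization guarantee recalled in Section \ref{subsec:impreg}: the choice $\bx^0=\sqrt{\lambda_1(\bY)/3}\,\widetilde{\bx}^0$ satisfies $\|\bx^0-\bx_*\|_2\le C_1$ and $\max_i|\ba_i^T(\bx^0-\bx_*)|\le C_2\sqrt{\log n}$ with the stated probability, and the leave-one-out sequences agree with $\bx^0$ up to the required $\sqrt{\log n/n}$ accuracy; these facts are established in \cite{ma2020implicit} and carry over verbatim, since the first step of \eqref{eq:HB} coincides with a gradient step.

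For the inductive step, assume the pair $(\bx^t,\bx^{t-1})$ lies in the RIC \eqref{eq:RIC} and that $\max_\ell\big\|[\bx^t-\bx^{t,(\ell)};\,\bx^{t-1}-\bx^{t-1,(\ell)}]\big\|_2\le C_3\sqrt{\log n/n}$. The locality bound at $t+1$ is then immediate from the contraction Lemma \ref{lem:contraction}: with $\mu=1/2$ and $L=c\log n$ from Lemma \ref{lem:scsm},
$$
\|\bx^{t+1}-\bx_*\|_2^2+\|\bx^t-\bx_*\|_2^2
\le\Big(\tfrac{\sqrt L-\sqrt\mu}{\sqrt L+\sqrt\mu}\Big)^2\big(\|\bx^t-\bx_*\|_2^2+\|\bx^{t-1}-\bx_*\|_2^2\big)\le C_1^2 .
$$
For incoherence, fix $\ell$ and split $|\ba_\ell^T(\bx^{t+1}-\bx_*)|\le|\ba_\ell^T(\bx^{t+1}-\bx^{t+1,(\ell)})|+|\ba_\ell^T(\bx^{t+1,(\ell)}-\bx_*)|$. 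The first term is at most $\|\ba_\ell\|_2\,\|\bx^{t+1}-\bx^{t+1,(\ell)}\|_2\le\sqrt{6n}\cdot C_3\sqrt{\log n/n}=\sqrt 6\,C_3\sqrt{\log n}$ by \eqref{eq:concnorm} and Lemma \ref{lem:loo}. For the second term, $\bx^{t+1,(\ell)}$ is a deterministic function of $\{\ba_i\}_{i\ne\ell}$, hence independent of $\ba_\ell$, so by \eqref{eq:concproj} and $\|\bx^{t+1,(\ell)}-\bx_*\|_2\le\|\bx^{t+1,(\ell)}-\bx^{t+1}\|_2+\|\bx^{t+1}-\bx_*\|_2\le 2C_1$ it is at most $10C_1\sqrt{\log n}$. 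Hence $|\ba_\ell^T(\bx^{t+1}-\bx_*)|\le(\sqrt 6\,C_3+10C_1)\sqrt{\log n}\le C_2\sqrt{\log n}$ once $C_2$ is large enough relative to $C_1,C_3$; a union bound over $\ell$ costs only a factor $m$ in the failure probability.

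The main obstacle is the step deferred to Lemma \ref{lem:loo}: propagating the proximity $\big\|[\bx^{t+1}-\bx^{t+1,(\ell)};\,\bx^t-\bx^{t,(\ell)}]\big\|_2\le C_3\sqrt{\log n/n}$ through one more iteration. Unlike gradient descent, \eqref{eq:HB} is not a contraction of any obvious Lyapunov function, so the proximity must be tracked for the \emph{pair} $(\bx^{t+1},\bx^t)$ via the $2n\times 2n$ companion matrix $\bM(t)$ of \eqref{eq:hbmatrix}: the pair-difference evolves as $\bM(t)$ applied to the previous pair-difference, plus a discrepancy term coming from the single dropped summand $((\ba_\ell^T\bx)^2-y_\ell)\ba_\ell\ba_\ell^T\bx$ in $\nabla f$ versus $\nabla f^{(\ell)}$, which is $O(\sqrt{\log n/n})$ in norm by the already-established locality and incoherence of $\bx^t$ together with the concentration bounds. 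The delicate point is that one cannot simply iterate $\|\bM(t)\|\le\rho<1$, since the spectral norm of $\bM(t)$ can exceed $1$ even when its spectral radius equals $\sqrt\beta$; one must instead pass to the weighted norm in which $\bM(t)$ is a genuine contraction (as in the proof of Lemma \ref{lem:contraction}), or unroll the product $\bM(t)\cdots\bM(T_0)$ and exploit that over the $O(\log n)$ relevant iterations these products stay bounded. Controlling this accumulation uniformly over the $m$ choices of $\ell$, and simultaneously with the incoherence that feeds back into the discrepancy bound, is the technical heart of the argument.
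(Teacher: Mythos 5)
Your proposal follows essentially the same route as the paper's proof of Lemma~\ref{lem:induction}: the base case comes from the spectral-initialization guarantee (Lemma~5 of \cite{ma2020implicit}), the locality bound propagates via the contraction Lemma~\ref{lem:contraction}, and the incoherence bound is obtained by splitting $\ba_\ell^T(\bx^{t+1}-\bx_*)$ through the leave-one-out iterate $\bx^{t+1,(\ell)}$, using $\|\ba_\ell\|_2\le\sqrt{6n}$, the leave-one-out proximity of Lemma~\ref{lem:loo}, and the statistical independence of $\bx^{t+1,(\ell)}$ from $\ba_\ell$ via \eqref{eq:concproj}. The subtlety you flag in your last paragraph --- that the Euclidean operator norm of the heavy-ball companion matrix $\bM(t)$ can exceed its spectral radius $\sqrt\beta$ because $\bM(t)$ is non-normal, so iterating a bound $\|\bM(t)\|_2\le\rho<1$ is not automatically justified --- is a genuine issue, but it is not a gap specific to your proposal: the paper's own proofs of Lemmas~\ref{lem:contraction} and~\ref{lem:loo} simply assert the operator-norm contraction with rate $\frac{\sqrt L-\sqrt\mu}{\sqrt L+\sqrt\mu}$ by appealing to ``the usual heavy ball method'' eigenvalue bound, which only controls the spectral radius; the remedies you name (a weighted norm in which each $\bM(t)$ is a genuine contraction, or direct control of the time-varying products $\bM(t)\cdots\bM(1)$) are what would be required to make the shared step rigorous.
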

\begin{proof}
The proof of all these relations relies on an induction argument. According to Lemma 5 in \cite{ma2020implicit},  for a given $\delta>0$ and $m$ large enough one has $\|\bx^0 - \bx_*\| \leq 2 \delta$. Choosing $\delta=\frac{C_1}{8}$ implies that $\|\bx^1-\bx_*\|^2+\|\bx^0-\bx_*\|^2 \leq C_1$. Furthermore, by Lemma \ref{lem:contraction}, for $\cC = \left( \frac{\sqrt{L}-\sqrt{\mu}}{\sqrt{L}+\sqrt{\mu}} \right)<1$,
    \begin{align}\label{eq:induct_locpf}
        &\|\bx^{t+1} -\bx_* \|_2^2 + \|\bx^t-\bx_* \|_2^2 \\ \nonumber
        & \leq \cC^2 (\|\bx^t-\bx_*\|_2^2+\|\bx^{t-1}-\bx_* \|_2^2) \leq C_1^2
    \end{align}

Then, we prove incoherence using induction as well:
\begin{align}
& \max_{1\leq \ell \leq m}|\ba_{\ell}^T (\bx^{t+1}-\bx_*)| \\
 \leq & |\ba_{\ell}^T(\bx^{t+1}-\bx^{t+1,(\ell)})+\ba_{\ell}^T(\bx^{t+1,(\ell)}-\bx_*)| \\
 \leq &\|\ba_{\ell}^T\| \| \bx^{t+1} -\bx^{t+1,(\ell)}\| + 5 \sqrt{\log(n)} \|\bx^{t+1,(\ell)}-\bx_*\| \\
\leq & \|\ba_{\ell}^T \| C_3 \sqrt{\frac{\log(n)}{n}} \\ 
&+ 5\sqrt{\log(n)} (\|\bx^{t+1,(\ell)}-\bx^{t+1} \| + \|\bx^{t+1}-\bx_*\|) \\
\leq & \sqrt{6n} C_3 \sqrt{\frac{\log(n)}{n}} + 5 \sqrt{\log(n)} \left(C_3 \sqrt{\frac{\log(n)}{n}} +C_1 \right) \\
\leq & C_2 \sqrt{\log(n)}
\end{align}
by taking $n$ large enough and using the fact that $C_2$ can be taken larger than $C_1$ and $C_3$. Here, the second to last inequality was obtained using \eqref{eq:induct_loc} and Lemma \ref{lem:loo}.

\end{proof}

To prove the upper bound on the incoherence in the previous Lemma, we must use leave-one-out sequences. Define the leave-one-out function
 $$f^{(\ell)}(\bx)=\frac{1}{4m}\sum_{i:i\neq \ell}((\ba_i^T \bx)^2-y_i)^2$$ 
 We let $\bx^{t, (\ell)}$ be sequence defined by running \eqref{eq:HB} on this function with the same initialization, $\bx^{0, (\ell)} = \bx^0$.

\begin{lem}[Leave one out]\label{lem:loo}

 Suppose that $\ba_j \overset{i.i.d.}{\sim}N(\bzero, \bI) $. Then, with probability at least $1-\cO(mn^{-10})$, for $n$ sufficiently large,
$$\max_{1 \leq \ell \leq m} \left\| \begin{bmatrix}
    \bx^{t+1}-\bx^{t+1,(\ell)} \\
    \bx^t -\bx^{t,(\ell)}
\end{bmatrix}\right\|_2\leq C_3 \sqrt{\frac{\log(n)}{n}}.$$
\end{lem}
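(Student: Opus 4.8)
The plan is to run an induction on $t$ that parallels the contraction argument of Lemma~\ref{lem:contraction}, but applied to the \emph{difference} sequence $\be^t := \bx^t - \bx^{t,(\ell)}$, tracked through pairs $(\be^{t+1}, \be^t)$. The key observation is that both $\bx^t$ and $\bx^{t,(\ell)}$ satisfy the Polyak recursion, so subtracting the two updates yields a recursion of exactly the form
\begin{equation}
\begin{bmatrix} \be^{t+1} \\ \be^t \end{bmatrix}
= \widetilde{\bM}(t) \begin{bmatrix} \be^t \\ \be^{t-1} \end{bmatrix} + \begin{bmatrix} \eta\, \br^t \\ \bzero \end{bmatrix},
\end{equation}
where $\widetilde{\bM}(t)$ has the same block structure as \eqref{eq:hbmatrix} with a Hessian $\nabla^2 f^{(\ell)}(\widetilde{\bxi}^t)$ evaluated along the segment $[\bx^{t,(\ell)},\bx^t]$ (via the fundamental theorem of calculus applied to $\nabla f^{(\ell)}$), and where the inhomogeneous term collects the discrepancy between the true gradient $\nabla f$ and the leave-one-out gradient $\nabla f^{(\ell)}$ at $\bx^t$, namely $\br^t = \nabla f^{(\ell)}(\bx^t) - \nabla f(\bx^t) = -\tfrac{1}{m}\big((\ba_\ell^T\bx^t)^2 - y_\ell\big)\ba_\ell\ba_\ell^T\bx^t$. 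First I would record that $f^{(\ell)}$ enjoys the same strong-convexity/smoothness bounds as $f$ on the RIC (Lemma~\ref{lem:scsm} applied with one fewer sample — it only loses in constants), so the spectral-radius bound $\|\widetilde{\bM}(t)\| \lesssim \frac{\sqrt{L}-\sqrt{\mu}}{\sqrt{L}+\sqrt{\mu}} =: \cC$ holds exactly as in Lemma~\ref{lem:contraction}, provided $\widetilde{\bxi}^t$ lies in the RIC — which follows from the inductive hypothesis (both endpoints close to $\bx_\star$ and incoherent, hence so is any convex combination, exactly as in the proof of Lemma~\ref{lem:contraction}).

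The next step is to bound the perturbation term $\eta\|\br^t\|$. Using $|(\ba_\ell^T\bx^t)^2 - y_\ell| = |(\ba_\ell^T\bx^t)^2 - (\ba_\ell^T\bx_\star)^2| = |\ba_\ell^T(\bx^t-\bx_\star)|\cdot|\ba_\ell^T(\bx^t+\bx_\star)|$, the incoherence bound \eqref{eq:inc} on $\bx^t$ (available by the outer induction of Lemma~\ref{lem:induction}), the concentration bound \eqref{eq:concproj} applied to $\bx^t+\bx_\star$ and to the isolated vector $\ba_\ell$ against $\bx^t$, together with $\|\ba_\ell\|_2 \le \sqrt{6n}$ from \eqref{eq:concnorm}, one obtains $\|\br^t\| \lesssim \frac{1}{m}\sqrt{\log n}\cdot\sqrt{\log n}\cdot\sqrt{n}\cdot \|\bx_\star\| \lesssim \frac{n\log n}{m}\cdot\frac{1}{\sqrt{n}}$; since $m \gtrsim n\log n$ this gives $\|\br^t\| \lesssim \frac{1}{\sqrt{n}}$, and with $\eta \lesssim 1/\log n$ we get $\eta\|\br^t\| \lesssim \frac{1}{\sqrt{n}\log n}$. (One has to be a little careful here: $\ba_\ell$ is independent of $\bx^{t,(\ell)}$ but not of $\bx^t$; the standard fix, as in \cite{ma2020implicit}, is to bound $|\ba_\ell^T(\bx^t-\bx_\star)|$ through the incoherence condition directly rather than via a fresh concentration inequality, and to split $\bx^t = \bx^{t,(\ell)} + \be^t$ so that the independent piece is handled by \eqref{eq:concproj} and the dependent piece $\ba_\ell^T\be^t$ is absorbed using $\|\ba_\ell\| \le \sqrt{6n}$ and the inductive bound on $\|\be^t\|$.)

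Finally, unrolling the recursion gives
\begin{equation}
\left\| \begin{bmatrix} \be^{t+1} \\ \be^t \end{bmatrix}\right\|_2
\le \cC^t \left\|\begin{bmatrix}\be^1\\\be^0\end{bmatrix}\right\|_2 + \eta \sum_{s=0}^{t} \cC^{t-s}\|\br^s\|
\le 0 + \frac{\eta \max_s \|\br^s\|}{1-\cC},
\end{equation}
using $\be^0 = \be^1 = \bzero$ from the shared initialization. Since $1 - \cC = \frac{2\sqrt{\mu}}{\sqrt{L}+\sqrt{\mu}} \asymp \frac{1}{\sqrt{\log n}}$, this yields $\|\be^{t+1}\| \lesssim \sqrt{\log n}\cdot\frac{1}{\sqrt{n}\log n} = \frac{1}{\sqrt{n\log n}} \le C_3\sqrt{\frac{\log n}{n}}$, as claimed, after taking the union bound over $\ell = 1,\dots,m$ (costing the extra factor of $m$ in the failure probability). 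I expect the main obstacle to be the second step: carefully handling the statistical dependence between $\ba_\ell$ and the true iterate $\bx^t$ when bounding $\|\br^t\|$, since this is exactly the point where the leave-one-out construction is doing its work, and one must thread the inductive hypotheses (incoherence of $\bx^t$ \emph{and} smallness of $\be^t$) through simultaneously. A secondary subtlety is confirming that the geometric-series argument, with its $\frac{1}{1-\cC} \asymp \sqrt{\log n}$ blow-up, still lands inside the target bound $C_3\sqrt{\log n / n}$ with room to spare for the constants — which it does precisely because $m \gtrsim n\log n$ buys an extra $\log n$ in $\|\br^t\|$.
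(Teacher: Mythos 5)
Your proposal follows the same backbone as the paper's argument: write the pair-difference recursion as a linear step governed by a block matrix $\widetilde{\bM}(t)$ with spectral radius $\cC = \tfrac{\sqrt{L}-\sqrt{\mu}}{\sqrt{L}+\sqrt{\mu}}$, plus a perturbation coming from the missing $\ell$-th summand, verify that the interpolation point $\widetilde{\bxi}^t$ lies in the RIC so the Hessian bounds of Lemma~\ref{lem:scsm} kick in, and show the perturbation is $o(\sqrt{\log n / n})$. Two differences from the paper are worth noting. First, you center the perturbation at $\bx^t$ (so $\br^t = \nabla f^{(\ell)}(\bx^t)-\nabla f(\bx^t)$ with the Hessian pushed through $f^{(\ell)}$), whereas the paper adds and subtracts $\eta\nabla f(\bx^{t,(\ell)})$ so its perturbation is $\nabla f(\bx^{t,(\ell)})-\nabla f^{(\ell)}(\bx^{t,(\ell)})$. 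The paper's choice is cleaner: that residual sees $\ba_\ell$ only against $\bx^{t,(\ell)}$, which is independent of $\ba_\ell$, so \eqref{eq:concproj} applies without surgery. Your workaround (incoherence for the dependent piece, concentration plus $\|\be^t\|$ small for the rest) does work, but threads the inductive hypotheses through an extra layer. Second, you unroll the recursion into a geometric sum rather than closing the single-step induction $\|\be^{t+1}\|\le\cC\|\be^t\|+c\eta\sqrt{\log n/n}\le C_3\sqrt{\log n/n}$ as the paper does; both land in the same place since $\eta/(1-\cC)\lesssim 1/\sqrt{\mu L}\lesssim 1/\sqrt{\log n}$.

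There is one genuine gap: the base case $\be^0 = \be^1 = \bzero$ "from the shared initialization" is inconsistent with the independence structure your own argument relies on. For $\ba_\ell^T\bx^{t,(\ell)}$ (or $\ba_\ell^T(\bx^{t,(\ell)}-\bx_\star)$) to be controllable by \eqref{eq:concproj}, the auxiliary sequence must be independent of $\ba_\ell$ at every $t$; this forces $\bx^{0,(\ell)}$ to be the \emph{leave-one-out} spectral initialization built from $\{\ba_i\}_{i\ne\ell}$, not $\bx^0$. (The paper's sentence "$\bx^{0,(\ell)}=\bx^0$" is itself inconsistent with the ensuing invocation of Lemma~6 of \cite{ma2020implicit}, which bounds $\max_\ell\|\bx^0-\bx^{0,(\ell)}\|_2$ as a nontrivial quantity; the leave-one-out initialization is the intended reading.) With the correct initialization, $\be^0,\be^1$ are nonzero but satisfy $\|\be^0\|,\|\be^1\|\le C_3\sqrt{\log n/n}$ by that lemma, so the leading term $\cC^t\|(\be^1,\be^0)\|$ in your geometric sum is bounded by $C_3\sqrt{\log n/n}$ rather than zero; the conclusion still follows after absorbing the perturbation sum into the constant. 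A minor aside: the chain bounding $\|\br^t\|$ should carry a factor $\sqrt{\log n}$ from $|\ba_\ell^T\bx^t|$ rather than $\|\bx_\star\|$, giving $\|\br^t\|\lesssim \sqrt{n}\log^{3/2}(n)/m$, which with $m\gtrsim n\log n$ is still $\lesssim\sqrt{\log n/n}$ with room to spare via the constant in the sample-size requirement.
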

\begin{proof} According to 6 in \cite{ma2020implicit} we have that
\begin{equation}
    \max_{1 \leq \ell \leq m} \| \bx^0-\bx^{0, (\ell)} \|_2 \leq C_3 \sqrt{\frac{\log (n)}{n}}
\end{equation}
for any constant $C_3$ (for $m$ large enough so that $\frac{n \log(n)}{m} \leq C_3$). Our base case is satisfied if $C_3$ is chosen appropriately and we take into account that $\bx^1=\bx^0, \bx^{1,(\ell)}=\bx^{0,(\ell)}$ (see Lemma 6 in \cite{ma2020implicit}).

Recall that $$\bx^{t+1} = \bx^t - \eta \nabla f (\bx^t) + \beta(\bx^t - \bx^{t-1}).$$ For the leave one out iteration we have:
\begin{align}
    \bx^{t+1}- \bx^{t+1, (\ell)} = \bx^t - \eta \nabla f (\bx^t) + \beta(\bx^t -\bx^{t-1})   \\
    -[\bx^{t,(\ell)}- \eta \nabla f^{(\ell)}(\bx^{t,\ell})] - \beta (\bx^{t, (\ell)} - \bx^{t-1, (\ell)}),
\end{align}
that can be rewritten as (by adding and subtracting the term $\eta \nabla f (\bx^{t,(\ell)})$):
\begin{align}
    \begin{bmatrix}
        \bx^{t+1}- \bx^{t+1, (\ell)} \\
        \bx^t -\bx^{t, (\ell)}
    \end{bmatrix}
 = &
\bM(t)
    \begin{bmatrix}
        \bx^t-\bx^{t,(\ell)} \\
        \bx^{t-1}-\bx^{t-1,(\ell)}
    \end{bmatrix} \\
    & - \eta \begin{bmatrix}
         \nabla f(\bx^{t,(\ell)}) -\nabla f^{(\ell)}(\bx^{t.(\ell)}) \\
         \boldsymbol{0}
    \end{bmatrix},
\end{align}
where \begin{equation}
\bM(t)=
    \begin{bmatrix}
     \bI - \eta \nabla^2 f(\boldsymbol{\xi}^t) + \beta \bI & -\beta \bI \\
     \bI & \boldsymbol{0}
    \end{bmatrix}
\end{equation}
for some $\boldsymbol{\xi}^t \in [\bx^t,\bx^{t,(\ell)}]$. As in the proof of Lemma \ref{lem:contraction}, $\boldsymbol{\xi}^t$  is in the RIC. If $\boldsymbol{\xi}^t= (1-\tau) \bx^t + \tau \bx^{t,(\ell)}$, then 
\begin{align}
\| \boldsymbol{\xi}^t-\bx_* \|_2 & \leq  (1-\tau) \| \boldsymbol{x}^t-\bx_* \|+\tau \|\bx^{t,(\ell)}-\bx_* \| \\
& \leq (1-\tau) C_1 + \tau (\|\bx^{t,(\ell)}-\bx^t\| + \|\bx^t-\bx_*\|) \\
& \leq (1-\tau) C_1+  \tau \left(C_3 \sqrt{\frac{\log(n)}{n}}+C_1\right) \\ 
& \leq 2 C_1
\end{align}
for large enough $n$. We can also show that $\boldsymbol{\xi}^t$ satisfies the incoherence condition using the induction hypothesis
\begin{align}
    &|\ba_{\ell}^T (\boldsymbol{\xi}^t - \boldsymbol{x}_*)| \\
    \leq & |(1-\tau)\ba_{\ell}^T(\bx^t-\bx_*) + \tau  \ba_{\ell}^T(\bx^{t,(\ell)}-\bx_*)| \\
\leq & (1-\tau) C_2 \sqrt{\log(n)} \\
& + \tau 5\sqrt{\log(n)} \| \bx^{t,(\ell)}-\bx^t + \bx^t -\bx_*\| \\
\leq & (1-\tau) C_2 \sqrt{\log(n)} \\
&+ \tau 5 \sqrt{\log(n)} \left( C_3 \sqrt{\frac{\log(n)}{n}} + C_1 \right) \\
\leq & C_2 \sqrt{\log(n)}
\end{align}

Thus, by bounding $\bM(t)$ and then using induction hypothesis, we obtain
\begin{align}
\label{eq:HBMbound}
\left\| \bM(t)
    \begin{bmatrix}
        \bx^t-\bx^{t,(\ell)} \\
        \bx^{t-1}-\bx^{t-1,(\ell)}
    \end{bmatrix} 
    \right\|_2  
    & \leq
    \left( \frac{\sqrt{L}-\sqrt{\mu}}{\sqrt{L}+\sqrt{\mu}} \right)
    \left\|
    \begin{bmatrix}
        \bx^t-\bx^{t,(\ell)} \\
        \bx^{t-1}-\bx^{t-1,(\ell)} 
    \end{bmatrix}
    \right\|_2  \\ \nonumber
    &\leq \left( \frac{\sqrt{L}-\sqrt{\mu}}{\sqrt{L}+\sqrt{\mu}} \right) C_3 \sqrt{\frac{\log(n)}{n}}.
\end{align}
On the other hand, by the definition of $f^{(\ell)}$ and \eqref{eq:1der},
\begin{align} \nabla f (x^{t,(\ell)})- \nabla f^{(\ell)} (\bx^{t,(\ell)})
& =  \frac{1}{m}[(\ba_{\ell}^T \bx^{t,(\ell)})^2-(\ba_{\ell}^T\bx_*)^2]\ba_{\ell} \ba_{\ell}^T \bx^{t,(\ell)}.
\end{align}
so that
\begin{align} \label{eq:graddiffbd}
\left\|
\begin{bmatrix}
         \nabla f(\bx^{t,(\ell)}) -\nabla f^{(\ell)}(\bx^{t.(\ell)}) \\
         \boldsymbol{0}
    \end{bmatrix}
\right\|_2  
&\leq \frac{1}{m} \|\ba_{\ell} \| |\ba_{\ell}^T \bx^{t,(\ell)}||(\ba_{\ell}^T \bx^{t,(\ell)})^2-(\ba_{\ell}^T \bx_*)^2|.
\end{align}

We proceed to bound each term in \eqref{eq:graddiffbd}. First,
\begin{equation}
    \| \ba_{\ell}\| \leq \sqrt{6 n}
\end{equation}
with probability at least $1-\cO(m e^{-1.5 n})$, by the concentration inequality \eqref{eq:concnorm}.

For the second factor, using the contraction part of Lemma \ref{lem:contraction} in \eqref{eq:induct_loc},
\begin{align}
    |\ba_{\ell}^T \bx^{t,(\ell)}| \leq |\ba_{\ell}^T||(\bx^{t,(\ell)}-\bx_*)| +|\ba_{\ell}^T \bx_*| & \leq 5 \sqrt{\log(n)} (\|\bx^{t,(\ell)}-\bx^t\| + \|\bx^t-\bx_* \|) +5 \sqrt{\log(n)} \\
    &\leq 5 \sqrt{\log(n)} \left(C_3 \sqrt{\frac{\log (n)}{n}} +C_1 \right) + 5 \sqrt{\log(n)} \\
    &\leq (C_4 +5) \sqrt{\log(n)}
\end{align}
for $n$ large enough so that $\sqrt{\frac{\log(n)}{n}}$ is as small as needed. We have also used concentration inequality \eqref{eq:concproj}.

For the third factor
\begin{align}
    |(\ba_{\ell}^T \bx^{t,(\ell)})^2-(\ba_{\ell}^T \bx_*)^2| & \leq |(\ba_{\ell}^T \bx^{t,(\ell)}-\ba_{\ell}^T \bx_*)^2 + 2 \ba_{\ell}^T(\bx^{t,(\ell)}-\bx_*) \ba_{\ell}^T \bx_*| \\
   & \leq |\ba_{\ell}^T (\bx^{t,(\ell)}-\bx_*)| |\ba_{\ell}^T(\bx^{t,(\ell)}-\bx_*)+2\ba_{\ell}^T \bx_*| \\
   & \leq C_2 \sqrt{\log(n)} (C_2 \sqrt{\log(n)}+ 10 \sqrt{\log(n)})
\end{align}
This follows from the fact that $\bx^{t, (\ell)}$ is independent of $\ba_\ell$, and by Gaussian concentration. The combination of all these bounds gives
\begin{align}
\label{eq:HBgradbound}
 \left\|
\begin{bmatrix}
         \nabla f(\bx^{t,(\ell)}) -\nabla f^{(\ell)}(\bx^{t.(\ell)}) \\
         \boldsymbol{0}
    \end{bmatrix}
\right\|_2 \nonumber 
& \leq \sqrt{6n} (C_4+5) \sqrt{\log(n)} C_2 (C_2+10) \log(n) \nonumber\\
& \leq C_4+5)C_4 (C_4+10) \frac{n \log (n)}{m} \sqrt{\frac{\log(n)}{n}} \\
&\leq c \sqrt{\frac{\log(n)}{n}}
\end{align}
for any $c$ provided $m$ is sufficiently large, $m \geq n \log(n)$.
Finally the combination of bounds \eqref{eq:HBMbound} and \eqref{eq:HBgradbound} yields
\begin{align}
\left\|
    \begin{bmatrix}
        \bx^{t+1}- \bx^{t+1, (\ell)} \\
        \bx^t -\bx^{t, (\ell)}
    \end{bmatrix}
\right\|_2 & \leq \left( \frac{\sqrt{L}-\sqrt{\mu}}{\sqrt{L}+\sqrt{\mu}}\right) C_3 \sqrt{\frac{\log(n)}{n}} + c \eta \sqrt{\frac{\log(n)}{n}},
\end{align}
and as $c$ can be made as small as desired and because of the relation between $\left(\frac{\sqrt{\kappa}-1}{\sqrt{\kappa}+1} \right)$ and $\eta$ then
\begin{equation}
 \left\|
    \begin{bmatrix}
        \bx^{t+1}- \bx^{t+1, (\ell)} \\
        \bx^t -\bx^{t, (\ell)}
    \end{bmatrix}
\right\|_2 \leq C_3 \sqrt{\frac{\log(n)}{n}}
\end{equation}
\end{proof}

\subsection{Proof of Theorem \ref{thm:HB} for Nesterov's Accelerated Gradient method}

\begin{proof}[Proof of Theorem \ref{thm:HB} for Nesterov's method \ref{eq:FG}]

Again, in the following, as a short hand, we use $\mu = 1/2$ and $L = c \log n$ as lower and upper bounds on the Hessian by Lemma \ref{lem:scsm}.

The proof follows the same pattern as for Polyak's method with the difference that the contraction constant for Nesterov's method is $\displaystyle \left(1-\frac{\sqrt{\mu}}{\sqrt{L}} \right)$ instead of $\displaystyle\left(1-\frac{2 \sqrt{\mu}}{\sqrt{L}+\sqrt{\mu}} \right)$ in equation \eqref{eq:contHBproof}.

For Nesterov's method it is also sufficient to choose $T_0=n$ because for $n$ large enough $$n \geq \displaystyle \frac{-\log(n)}{\log \left(1-\frac{\sqrt{\mu}}{\sqrt{L}}\right)} \geq \displaystyle  \frac{-\log(n)}{\log \left(1-\frac{\sqrt{1/2}}{\sqrt{C \log (n)}}\right)}$$ which implies that for $t \geq T_0 =n$
\begin{equation}
   \displaystyle \left(1-\frac{\sqrt{1/2}}{\sqrt{C \log(n)}} \right)^t \leq \frac{1}{n} 
\end{equation}
The rest of the proof stays the same as the proof of Theorem \ref{thm:HB} for \eqref{eq:HB} given in Appendix \ref{app:HBpf}.
\end{proof}

We provide the lemmas as before for completeness. Note that there are small differences in the proof and these lemmas in order to deal with the extrapolation in the gradient computation for \eqref{eq:FG}. In particular, it takes more work to ensure that the extrapolated point $\bx^t + \beta(\bx^t - \bx^{t-1})$ remains in the RIC. To start, however, we show that as long as $\bx^t, \bx^{t-1}$ points are sufficiently far inside the RIC, then the pair has a contraction property. 
\begin{lem}[Contraction for Nesterov]
\label{lem:contractionNest}
If $\ba_j \overset{i.i.d.}{\sim}N(\bzero, \bI)$,  then with probability at least $1-\mathcal{O}(mn^{-10})$, the iterates \eqref{eq:FG} satisfy
\begin{equation}
    \left\| \begin{bmatrix}
        \bx^{t+1}-\bx^* \\
        \bx^{t}-\bx^*
    \end{bmatrix}\right\|_2 = \left( 1-\frac{\sqrt{\mu}}{\sqrt{L}}\right) \left\|\begin{bmatrix}
        \bx^t-\bx^* \\
        \bx^{t-1}-\bx^*
    \end{bmatrix}\right\|_2
\end{equation}
when $\bx^t, \bx^{t-1}$ satisfies conditions \eqref{eq:RIC}, $\eta=\frac{1}{c \log n}$, and $\beta = \frac{\sqrt{c\log(n)}-\sqrt{1/2}}{\sqrt{c\log(n)}+\sqrt{1/2}}$.
\end{lem}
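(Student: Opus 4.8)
The plan is to follow the proof of Lemma \ref{lem:contraction}, recasting one step of \eqref{eq:FG} as a linear recursion in the pair $\big(\bx^t-\bx_*,\,\bx^{t-1}-\bx_*\big)$ and bounding the associated transition matrix, the only genuinely new ingredient being that the gradient is now evaluated at the extrapolated point. Write $\bz^t := \bx^t + \beta(\bx^t-\bx^{t-1})$, so that $\bz^t - \bx_* = (1+\beta)(\bx^t-\bx_*) - \beta(\bx^{t-1}-\bx_*)$. Since the observations are noiseless we have $\nabla f(\bx_*) = \bzero$, and the fundamental theorem of calculus gives $\nabla f(\bz^t) = \bH^t(\bz^t-\bx_*)$, where $\bH^t := \int_0^1 \nabla^2 f\big(\bx_* + s(\bz^t-\bx_*)\big)\,\mathrm{d}s$ is the Hessian of $f$ averaged over the segment $[\bx_*,\bz^t]$. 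Substituting into \eqref{eq:FG} and collecting terms in $\bx^t-\bx_*$ and $\bx^{t-1}-\bx_*$ yields
\begin{equation}
\begin{bmatrix} \bx^{t+1}-\bx_* \\ \bx^t - \bx_* \end{bmatrix} = \bM(t)\begin{bmatrix} \bx^t-\bx_* \\ \bx^{t-1}-\bx_* \end{bmatrix}, \qquad \bM(t) := \begin{bmatrix} (1+\beta)(\bI-\eta\bH^t) & -\beta(\bI-\eta\bH^t) \\ \bI & \bzero \end{bmatrix},
\end{equation}
which is exactly the transition matrix of Nesterov's method applied to the quadratic model with curvature $\bH^t$.

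Next I would verify that the \emph{entire} segment $[\bx_*,\bz^t]$ lies in the RIC, so that Lemma \ref{lem:scsm} applies along it and hence $\mu\bI = \tfrac12\bI \preceq \bH^t \preceq c\log(n)\,\bI = L\bI$. For every $s \in [0,1]$,
\begin{equation}
\|s(\bz^t-\bx_*)\|_2 \le (1+\beta)\|\bx^t-\bx_*\|_2 + \beta\|\bx^{t-1}-\bx_*\|_2,
\end{equation}
and similarly $\max_i|\ba_i^T\big(s(\bz^t-\bx_*)\big)| \le (1+\beta)\max_i|\ba_i^T(\bx^t-\bx_*)| + \beta\max_i|\ba_i^T(\bx^{t-1}-\bx_*)|$. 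As $0\le\beta<1$, both right-hand sides are at most three times the corresponding quantities for $\bx^t$ and $\bx^{t-1}$, so the segment stays in the region where Lemma \ref{lem:scsm} pinches the Hessian between $\mu$ and $L$, \emph{provided} $\bx^t,\bx^{t-1}$ lie well inside the RIC --- this is exactly the ``sufficiently far inside'' hypothesis: one invokes the tighter pair bound $\|\bx^t-\bx_*\|_2^2 + \|\bx^{t-1}-\bx_*\|_2^2 \le C_1^2$ carried by the induction of the Nesterov analogue of Lemma \ref{lem:induction}, and applies Lemma \ref{lem:scsm} with its radius enlarged from $2C_1$ to $\ge 3C_1$ (equivalently, taking $C_1$ smaller from the outset). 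I expect \textbf{this containment step} to be the main obstacle: it is precisely the extra work absent in the Polyak case, since extrapolation can displace $\bx^t$ by as much as $\beta\|\bx^t-\bx^{t-1}\|_2$, and the RIC constants have to be chosen to absorb it.

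Finally, with $\mu\bI\preceq\bH^t\preceq L\bI$ in hand I would bound $\|\bM(t)\|_2$ exactly as in Lemma \ref{lem:contraction}: diagonalizing $\bH^t$ and conjugating $\bM(t)$ by the corresponding orthogonal transformation in each block reduces $\bM(t)$ to $2\times 2$ blocks $\begin{bmatrix}(1+\beta)(1-\eta\lambda) & -\beta(1-\eta\lambda)\\ 1 & 0\end{bmatrix}$ indexed by the eigenvalues $\lambda\in[\mu,L]$ of $\bH^t$; with $\eta = 1/L$ and $\beta = \frac{\sqrt{L}-\sqrt\mu}{\sqrt{L}+\sqrt\mu} = \frac{\sqrt\kappa-1}{\sqrt\kappa+1}$, the standard tuned-Nesterov computation on a $\mu$-strongly convex, $L$-smooth quadratic shows these blocks have eigenvalues of modulus at most $1 - 1/\sqrt\kappa = 1 - \sqrt\mu/\sqrt L$, uniformly over $\lambda$ (as in the Polyak case, one invokes the classical bound via \cite{polyak1964some,polyak1987introduction} to pass from the eigenvalue bound to the operator norm). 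Since the Hessian bounds of Lemma \ref{lem:scsm} hold with probability at least $1-\cO(mn^{-10})$, this gives, on that event,
\begin{equation}
\left\|\begin{bmatrix}\bx^{t+1}-\bx_*\\ \bx^t-\bx_*\end{bmatrix}\right\|_2 \le \|\bM(t)\|_2 \left\|\begin{bmatrix}\bx^t-\bx_*\\ \bx^{t-1}-\bx_*\end{bmatrix}\right\|_2 \le \left(1 - \frac{\sqrt\mu}{\sqrt L}\right)\left\|\begin{bmatrix}\bx^t-\bx_*\\ \bx^{t-1}-\bx_*\end{bmatrix}\right\|_2,
\end{equation}
which is the claimed contraction.
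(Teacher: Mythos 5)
Your proposal is correct and follows essentially the same route as the paper: recast one Nesterov step as a linear recursion in the pair of error vectors, verify that the segment $[\bx_*,\bz^t]$ stays inside a suitably shrunken RIC (absorbing the factor $1+2\beta<3$ introduced by the extrapolation), and bound the $2n\times 2n$ transition matrix by the tuned-Nesterov contraction constant $1-\sqrt{\mu/L}$. As a minor bonus, your use of the integral-form Hessian $\bH^t=\int_0^1\nabla^2 f\big(\bx_*+s(\bz^t-\bx_*)\big)\,\mathrm{d}s$ is the rigorous version of the paper's single mean-value point $\bxi^t$, and your matrix $\bM(t)$ correctly carries the step size $\eta$ in \emph{both} off-diagonal blocks, silently fixing a typo in the paper's displayed $\bM(t)$ (where the $\eta$ is missing from the $-\beta(\bI-\eta\nabla^2 f(\bxi^t))$ block).
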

\begin{proof}

Let's look at the recursion for $\bx^t$ in Nesterov's method. In this case, we have
\begin{equation}
    \bx^{t+1} = \bx^t - \eta \nabla f(\bx^t + \beta(\bx^{t}-\bx^{t-1})) + \beta (\bx^t - \bx^{t-1})
\end{equation}
We then have
\begin{align*}
    \bx^{t+1} - \bx^* &= \bx^t -\bx^* \\ \nonumber
    &-\eta \nabla^2 f(\bxi^t)(\bx^t + \beta(\bx^t - \bx^{t-1}) - \bx^*) \\ 
    &-\beta(\bx^t - \bx^*) \\
    &+\beta(\bx^{t-1} - \bx^*)
\end{align*}
that can be rewritten as
\begin{equation}
    \begin{bmatrix}
        \bx^{t+1}-\bx^* \\
        \bx^{t}-\bx^*
    \end{bmatrix} = \bM(t) \begin{bmatrix}
        \bx^t-\bx^* \\
        \bx^{t-1}-\bx^*
    \end{bmatrix}
\end{equation}
where
\begin{equation}
    \bM(t) = \begin{bmatrix}
        (1+\beta)(\bI - \eta \nabla^2 f(\bxi^t)) & -\beta(\bI - \nabla^2 f(\bxi^t)) \\ \bI & \bzero
    \end{bmatrix}
\end{equation}
and $\bxi^t \in [\bx_*,\bx^t + \beta(\bx^t-\bx^{t-1})]$, the segment joining $\bx_*$ and $\bx^t + \beta(\bx^t-\bx^{t-1})$.

We can show directly that $\bxi^t$ is in the region of RIC. For some $\tau \in (0,1)$ we have
\begin{align}
    \| \bxi^t-\bx_*\|_2& = \| (1-\tau) \bx_* + \tau (\bx^t+\beta(\bx^t-\bx^{t-1}))-\bx_* \| \nonumber \\
    & = \tau \|\bx^t - \bx_* + \beta(\bx^t-\bx_*) + \beta( \bx_*-\bx^{t-1} )\| \nonumber \\
    & \leq \frac{C_1}{3} + 2 \beta \frac{C_1}{3} \leq 2 C_1 .
\end{align}
and for incoherence we get
\begin{align}
& |\ba_{\ell}^T (\bxi^t-\bx_*)| \\
  & \leq  |\ba_{\ell}^T ((1-\tau) \bx_* + \tau (\bx^t+\beta(\bx^t-\bx^{t-1}))-\bx_*)| \\
  & = \tau| \ba_{\ell}^T (\bx^t-\bx_* +\beta(\bx^t-\bx^{t-1}))| \\
  & \leq |(1+\beta) \ba_{\ell}^T (\bx^t-\bx_*) + \beta \ba_{\ell}^T (\bx_*-\bx^t)| \\
  & \leq (1+2 \beta) \frac{C_2}{3} \sqrt{\log(n)}\leq C_2 \sqrt{\log(n)}
\end{align}
Therefore, as $f$ can be considered strongly convex and smooth, we can bound the eigenvalues of the matrix $M(t)$ to obtain
\begin{equation}
    \left\| \begin{bmatrix}
        \bx^{t+1}-\bx^* \\
        \bx^{t}-\bx^*
    \end{bmatrix}\right\|_2 = \left( 1-\frac{\sqrt{\mu}}{\sqrt{L}}\right) \left\|\begin{bmatrix}
        \bx^t-\bx^* \\
        \bx^{t-1}-\bx^*
    \end{bmatrix}\right\|_2
\end{equation}
\end{proof}

\begin{lem}[induction for Nesterov]
\label{lem:inductionNest}
For $n$ sufficiently large, with probability at least $1-\cO(mn^{-10})$,
\begin{align}
    \|\bx^{t+1}-\bx_* \|_2^2 + \|\bx^{t}-\bx_* \|_2^2 & \leq (C_1/3)^2 \\
    \max_{1 \leq i \leq m} |\ba_i^T(\bx^{t+1}-\bx_*)| &\leq \frac{C_2}{3} \sqrt{\log(n)}
\end{align}
\begin{proof}
In exactly the same way as for heavy ball, Lemma 5 in \cite{ma2020implicit} implies that the conditions are satisfied for the base case. Now, by Lemma \ref{lem:contractionNest}, for $\cC =\left(1-\frac{\sqrt{\mu}}{\sqrt{L}} \right) < 1$,
    \begin{align}
        &\|\bx^{t+1} -\bx_* \|_2^2 + \|\bx^t-\bx_* \|_2^2 \\ \nonumber
        & \leq \cC^2 (\|\bx^t-\bx_*\|_2^2+\|\bx^{t-1}-\bx_* \|_2^2) \leq\left(\frac{C_1}{3} \right)^2
    \end{align}
We can also use induction to prove incoherence:
\begin{align}
& \max_{1\leq \ell \leq m}|\ba_{\ell}^T (\bx^{t+1}-\bx_*)| \\
 \leq & |\ba_{\ell}^T(\bx^{t+1}-\bx^{t+1,(\ell)})+\ba_{\ell}^T(\bx^{t+1,(\ell)}-\bx_*)| \\
 \leq &\|\ba_{\ell}^T\| \| \bx^{t+1} -\bx^{t+1,(\ell)}\| + 5 \sqrt{\log(n)} \|\bx^{t+1,(\ell)}-\bx_*\| \\
\leq & \|\ba_{\ell}^T \| C_3 \sqrt{\frac{\log(n)}{n}} \\ 
&+ 5\sqrt{\log(n)} (\|\bx^{t+1,(\ell)}-\bx^{t+1} \| + \|\bx^{t+1}-\bx_*\|) \\
\leq & \sqrt{6n} C_3 \sqrt{\frac{\log(n)}{n}} + 5 \sqrt{\log(n)} \left(C_3 \sqrt{\frac{\log(n)}{n}} +\frac{C_1}{3} \right) \\
\leq & \frac{C_2}{3} \sqrt{\log(n)}
\end{align}
by taking $n$ large enough, considering that $C_2$ can be chosen larger than $C_1$ and that $C_3$ can be chosen as small as necessary by taking a large enough $c_0$ in $m\geq c_0 n \log(n)$.
\end{proof}
\end{lem}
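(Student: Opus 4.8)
The plan is to establish both displayed inequalities by a single simultaneous induction on $t$, following the template of Lemma~\ref{lem:induction} for Polyak's method but carrying the tighter invariants $C_1/3$ and $\tfrac{C_2}{3}\sqrt{\log n}$, whose extra slack is exactly what absorbs Nesterov's extrapolation. For the base case $t=0$ (hence also $t=1$, since $\bx^1=\bx^0$ and $\bx^{1,(\ell)}=\bx^{0,(\ell)}$) I would quote the spectral-initialization estimate of \cite{ma2020implicit}: for any prescribed $\delta$ and $m$ large enough, $\|\bx^0-\bx_*\|\le 2\delta$; choosing $\delta$ a small enough multiple of $C_1$ yields $\|\bx^1-\bx_*\|^2+\|\bx^0-\bx_*\|^2\le (C_1/3)^2$. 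For incoherence at the base, $\bx^0$ is independent of each $\ba_\ell$, so \eqref{eq:concproj} gives $\max_{\ell}|\ba_\ell^T(\bx^0-\bx_*)|\le 5\sqrt{\log n}\,\|\bx^0-\bx_*\|\le \tfrac{C_2}{3}\sqrt{\log n}$ once $C_2$ is large relative to $C_1$.

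\emph{Inductive step, locality.} Granting the invariants at $t-1$ and $t$, the pair $(\bx^t,\bx^{t-1})$ satisfies the RIC hypotheses \eqref{eq:RIC} with room to spare, so Lemma~\ref{lem:contractionNest} applies and gives
\[
\|\bx^{t+1}-\bx_*\|^2+\|\bx^t-\bx_*\|^2\;\le\;\Big(1-\tfrac{\sqrt{\mu}}{\sqrt{L}}\Big)^{2}\big(\|\bx^t-\bx_*\|^2+\|\bx^{t-1}-\bx_*\|^2\big)\;\le\;(C_1/3)^2,
\]
since the Nesterov contraction factor is $<1$. This closes the locality half of the induction.

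\emph{Inductive step, incoherence.} This step requires a leave-one-out proximity bound for \eqref{eq:FG}, namely the analogue of Lemma~\ref{lem:loo} asserting $\max_{\ell}\big\|\big(\bx^{t+1}-\bx^{t+1,(\ell)},\,\bx^t-\bx^{t,(\ell)}\big)\big\|\le C_3\sqrt{\log n/n}$. I would prove it by the same recursion as in Lemma~\ref{lem:loo}, now with the Nesterov two-step transition matrix $\bM(t)$ from Lemma~\ref{lem:contractionNest} and with the additional verification (carried out exactly as in that proof) that the extrapolated evaluation point $\bx^t+\beta(\bx^t-\bx^{t-1})$, as well as its leave-one-out counterpart, lies in the RIC; the residual gradient term $\nabla f(\bx^{t,(\ell)})-\nabla f^{(\ell)}(\bx^{t,(\ell)})$ is then controlled exactly as in \eqref{eq:graddiffbd}--\eqref{eq:HBgradbound}. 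With this bound in hand I would split
\[
\ba_\ell^T(\bx^{t+1}-\bx_*)=\ba_\ell^T(\bx^{t+1}-\bx^{t+1,(\ell)})+\ba_\ell^T(\bx^{t+1,(\ell)}-\bx_*),
\]
bound the first term by $\|\ba_\ell\|\,C_3\sqrt{\log n/n}\le \sqrt{6n}\,C_3\sqrt{\log n/n}$ via \eqref{eq:concnorm}, and the second by $5\sqrt{\log n}\,\|\bx^{t+1,(\ell)}-\bx_*\|\le 5\sqrt{\log n}\big(C_3\sqrt{\log n/n}+C_1/3\big)$ via \eqref{eq:concproj} (legitimate because $\bx^{t+1,(\ell)}$ is independent of $\ba_\ell$) together with the locality bound just proved. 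Summing, taking $n$ large, and choosing $C_2$ large relative to $C_1$ and $C_3$ gives $\max_{\ell}|\ba_\ell^T(\bx^{t+1}-\bx_*)|\le \tfrac{C_2}{3}\sqrt{\log n}$.

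\emph{Main obstacle.} The genuinely new difficulty over the Polyak case is this extrapolation: because $\nabla f$ is evaluated at $\bx^t+\beta(\bx^t-\bx^{t-1})$, which does not lie on the segment $[\bx_*,\bx^t]$, one must separately certify that this point stays in the RIC before Lemma~\ref{lem:scsm} can be invoked on the relevant Hessian, and likewise in the leave-one-out recursion one must compare the extrapolated points of the two sequences rather than the iterates themselves. This is precisely why the invariants are $C_1/3$ and $\tfrac{C_2}{3}\sqrt{\log n}$ rather than $2C_1$ and $C_2\sqrt{\log n}$, and why the constants must be propagated carefully through the $2\times2$ block transition matrix. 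A union bound over the constantly many high-probability events invoked (Lemma~\ref{lem:scsm}, the Nesterov leave-one-out lemma, \eqref{eq:concnorm}, \eqref{eq:concproj}) preserves the stated probability $1-\cO(mn^{-10})$.
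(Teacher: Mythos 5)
Your proposal is correct and takes essentially the same approach as the paper: base case from the spectral-initialization bound of \cite{ma2020implicit}, locality via the Nesterov contraction factor from Lemma~\ref{lem:contractionNest}, and incoherence via the leave-one-out split with $\|\ba_\ell\|\le\sqrt{6n}$ and $\max_\ell|\ba_\ell^T(\bx^{t+1,(\ell)}-\bx_*)|\le 5\sqrt{\log n}\,\|\bx^{t+1,(\ell)}-\bx_*\|$. One small notational slip: in your sketch of the Nesterov leave-one-out recursion the residual gradient term should be evaluated at the extrapolated point $\bz^{t,(\ell)}=\bx^{t,(\ell)}+\beta(\bx^{t,(\ell)}-\bx^{t-1,(\ell)})$ rather than at $\bx^{t,(\ell)}$, which you in fact acknowledge elsewhere in the same paragraph.
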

Here, as in the proof of the corresponding lemma for heavy ball, to prove the upper bound on the incoherence in the previous Lemma, we must use leave-one-out sequences. As before, the leave-one-out function is defined as
 $$f^{(\ell)}(\bx)=\frac{1}{4m}\sum_{i:i\neq \ell}((\ba_i^T \bx)^2-y_i)^2$$ 
 and we let $\bx^{t, (\ell)}$ be the sequence defined by running \eqref{eq:FG} on this function with the same initialization, $\bx^{0, (\ell)} = \bx^0$.
\begin{lem}[Leave one out for Nesterov]
For $n$ sufficiently large, with probability at least $1-\cO(mn^{-10})$,
    $$\max_{1 \leq \ell \leq m} \left\| \begin{bmatrix}
    \bx^{t+1}-\bx^{t+1,(\ell)} \\
    \bx^t -\bx^{t,(\ell)}
\end{bmatrix}\right\|_2\leq C_3 \sqrt{\frac{\log(n)}{n}}.$$
\end{lem}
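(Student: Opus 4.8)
The plan is to repeat the leave-one-out argument used for Polyak's method (Lemma \ref{lem:loo}), but carrying the extrapolated point through the recursion and exploiting the fact that Lemma \ref{lem:inductionNest} was deliberately proved with the \emph{reduced} constants $C_1/3$ and $C_2/3$, so that there is enough slack to absorb the factor $1+\beta\le 2$ that the Nesterov step introduces. The base case is identical to that of Lemma \ref{lem:loo}: since $\bx^1=\bx^0$ and $\bx^{1,(\ell)}=\bx^{0,(\ell)}$, Lemma 6 of \cite{ma2020implicit} gives $\max_{1\le\ell\le m}\|\bx^0-\bx^{0,(\ell)}\|_2\le C_3\sqrt{\log(n)/n}$ for any prescribed $C_3$ once $m\gtrsim n\log n$, which settles $t=0$.

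For the inductive step, write the \eqref{eq:FG} update for the full and the leave-one-out iterates, set $\bz^t=\bx^t+\beta(\bx^t-\bx^{t-1})$ and $\bz^{t,(\ell)}=\bx^{t,(\ell)}+\beta(\bx^{t,(\ell)}-\bx^{t-1,(\ell)})$, subtract, and add and subtract $\eta\nabla f(\bz^{t,(\ell)})$. Using $\nabla f(\bz^t)-\nabla f(\bz^{t,(\ell)})=\nabla^2 f(\bxi^t)(\bz^t-\bz^{t,(\ell)})$ for some $\bxi^t\in[\bz^{t,(\ell)},\bz^t]$ and $\bz^t-\bz^{t,(\ell)}=(1+\beta)(\bx^t-\bx^{t,(\ell)})-\beta(\bx^{t-1}-\bx^{t-1,(\ell)})$, one obtains
\begin{equation*}
\begin{bmatrix}\bx^{t+1}-\bx^{t+1,(\ell)}\\ \bx^t-\bx^{t,(\ell)}\end{bmatrix}=\bM(t)\begin{bmatrix}\bx^t-\bx^{t,(\ell)}\\ \bx^{t-1}-\bx^{t-1,(\ell)}\end{bmatrix}-\eta\begin{bmatrix}\nabla f(\bz^{t,(\ell)})-\nabla f^{(\ell)}(\bz^{t,(\ell)})\\ \bzero\end{bmatrix},
\end{equation*}
with $\bM(t)$ exactly the Nesterov iteration matrix appearing in the proof of Lemma \ref{lem:contractionNest}. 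The first thing to verify is $\bxi^t\in$ RIC: writing $\bxi^t=(1-\tau)\bz^{t,(\ell)}+\tau\bz^t$ and expanding each extrapolated point as $(\bx^\bullet-\bx_*)+\beta(\bx^\bullet-\bx_*)-\beta(\bx_*-\bx^{\bullet-1})$, the locality and incoherence bounds of Lemma \ref{lem:inductionNest} (constants $C_1/3$, $C_2/3$), the induction hypothesis of the present lemma at step $t$, and \eqref{eq:concnorm}, \eqref{eq:concproj}, give $\|\bxi^t-\bx_*\|_2\le 2C_1$ and $\max_i|\ba_i^T(\bxi^t-\bx_*)|\le C_2\sqrt{\log n}$; the gap between $C_1/3$ and $2C_1$ (resp. $C_2/3$ and $C_2$) is precisely what absorbs the $1+\beta\le 2$.

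With $\bxi^t$ in the RIC, Lemma \ref{lem:scsm} lets us treat $f$ as $\tfrac12$-strongly convex and $O(\log n)$-smooth along the relevant segment, and the eigenvalue analysis of Lemma \ref{lem:contractionNest} gives $\|\bM(t)\|_2\le 1-\sqrt\mu/\sqrt L$ for $\eta=1/(c\log n)$ and $\beta=\frac{\sqrt{c\log n}-\sqrt{1/2}}{\sqrt{c\log n}+\sqrt{1/2}}$, so $\bM(t)$ shrinks the induction-hypothesis bound $C_3\sqrt{\log n/n}$ by a factor strictly below $1$. For the mismatch term, $\nabla f(\bz^{t,(\ell)})-\nabla f^{(\ell)}(\bz^{t,(\ell)})=\tfrac1m\big[(\ba_\ell^T\bz^{t,(\ell)})^2-(\ba_\ell^T\bx_*)^2\big]\ba_\ell\ba_\ell^T\bz^{t,(\ell)}$, and crucially $\bz^{t,(\ell)}$ depends only on $\{\ba_i\}_{i\ne\ell}$, hence is independent of $\ba_\ell$; combining $\|\ba_\ell\|_2\le\sqrt{6n}$, $|\ba_\ell^T\bz^{t,(\ell)}|\lesssim\sqrt{\log n}$ and $|(\ba_\ell^T\bz^{t,(\ell)})^2-(\ba_\ell^T\bx_*)^2|\lesssim\log n$ (all from \eqref{eq:concnorm}, \eqref{eq:concproj} and the incoherence of $\bz^{t,(\ell)}$ established above) yields a bound of order $\tfrac{\sqrt n(\log n)^{3/2}}{m}=\tfrac{n\log n}{m}\sqrt{\tfrac{\log n}{n}}\le c\sqrt{\log n/n}$ for any prescribed $c$ once $m\gtrsim n\log n$. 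Hence $\left\|\begin{bmatrix}\bx^{t+1}-\bx^{t+1,(\ell)}\\ \bx^t-\bx^{t,(\ell)}\end{bmatrix}\right\|_2\le(1-\sqrt\mu/\sqrt L)C_3\sqrt{\log n/n}+c\eta\sqrt{\log n/n}\le C_3\sqrt{\log n/n}$, since $c$ (hence $c\eta$) can be taken small relative to the contraction gap; a union bound over $\ell$ and the $O(n)$ induction steps costs only a polynomial factor in the failure probability.

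The main obstacle is the RIC membership of the extrapolated segment endpoints $\bz^t,\bz^{t,(\ell)}$, equivalently of $\bxi^t$: unlike in the heavy-ball case one cannot simply invoke convexity of the RIC along $[\bx_*,\bx^t]$, because $\bz^t$ reaches ``past'' $\bx^t$ by a $\beta$-weighted increment, so one must explicitly control how the extrapolation inflates both the distance to $\bx_*$ and the correlations $\ba_i^T(\cdot-\bx_*)$ — which is exactly the reason Lemma \ref{lem:inductionNest} is stated with the tightened constants $C_1/3$, $C_2/3$. Everything else is a routine repetition of the bookkeeping in the proof of Lemma \ref{lem:loo}.
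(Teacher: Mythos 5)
Your proposal is correct and follows essentially the same route as the paper: write the Nesterov LOO recursion in $2\times 2$ block form via a mean-value Hessian at a point $\bxi^t$ on the segment between the two extrapolated points $\bz^{t,(\ell)}$ and $\bz^t$, verify $\bxi^t$ lies in the RIC by absorbing the $1+\beta$ inflation with the $C_1/3$, $C_2/3$ slack from Lemma \ref{lem:inductionNest}, bound the iteration matrix by $1-\sqrt{\mu/L}$, bound the gradient-mismatch term via independence of $\bz^{t,(\ell)}$ from $\ba_\ell$ together with \eqref{eq:concnorm}--\eqref{eq:concproj}, and combine. One small imprecision: you contrast this with the heavy-ball case by saying one could ``simply invoke convexity of the RIC along $[\bx_*,\bx^t]$'' there; in fact the heavy-ball \emph{leave-one-out} lemma (Lemma \ref{lem:loo}) already uses the segment $[\bx^t,\bx^{t,(\ell)}]$ rather than $[\bx_*,\bx^t]$, so the genuine new difficulty for Nesterov is only the $\beta$-extrapolation of both endpoints, which you do handle correctly.
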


\begin{proof}
In an analogous way to the heavy ball case, we can write one iteration of Nesterov acceleration as
\begin{align}
    &\begin{bmatrix}
        \bx^{t+1}-\bx^{t+1,(\ell)} \\
        \bx^{t}-\bx^{t,(\ell)}
    \end{bmatrix} = \bM_2(t) \begin{bmatrix}
        \bx^{t}-\bx^{t,(\ell)} \\
        \bx^{t-1}-\bx^{t-1,(\ell)}
    \end{bmatrix} \nonumber \\ \nonumber
    & - \eta \begin{bmatrix}
         \nabla f(\bx^{t,(\ell)} + \beta(\bx^{t,(\ell)} - \bx^{t-1,(\ell)}) ) \\
         \boldsymbol{0}
    \end{bmatrix} \\ 
    &+ \eta \begin{bmatrix}
          \nabla f^{(\ell)}(\bx^{t,(\ell)} + \beta(\bx^{t,(\ell)} - \bx^{t-1,(\ell)})) \\
         \boldsymbol{0}
    \end{bmatrix},
    \label{eq:looNineqdecomp}
\end{align}
where
\begin{equation}
    \bM_2(t) = \begin{bmatrix}
        (1+\beta)(\bI - \eta \nabla^2 f(\bxi^t)) & -\beta(\bI - \eta \nabla^2 f(\bxi^t)) \\ \bI & \bzero
    \end{bmatrix}.
\end{equation}
with $\bxi^t =(1-\tau) \bz^{t,(\ell)} + \tau \bz^t$, where $\tau \in (0,1)$ and
\begin{align}
    \bz^{t,(\ell)} &= \bx^{t,(\ell)} + \beta (\bx^{t,(\ell)}-\bx^{t-1,(\ell)}) \\
    \bz^t & = \bx^t + \beta(\bx^t-\bx^{t-1}) .
\end{align}
 We can show directly that $\bxi^t$ is in the region of RIC. First, we have that
 \begin{align}
     \| \bxi^t-\bx_* \| &= (1-\tau) \|\bz^{t,(\ell)} - \bx_* \| + \tau \| \bz^t -\bx_* \| \\
     &\leq (1-\tau) (\|\bz^{t,(\ell)}- \bz^t \| +         \| \bz^t- \bx_* \|) + \tau \| \bz^t -\bx_* \| \\
     &\leq (1-\tau) \|\bz^{t,(\ell)} -\bz^t \| + \| \bz^t -\bx_*\|.
 \end{align}
 We can bound $\|\bxi^t - \bx_* \|$ in two parts:
 \begin{align}
     \| \bz^t - \bx_*\| & \leq (1+\beta) \|\bx^t - \bx_* \| + \beta \|\bx^{t-1}-\bx_*\| \\
     & \leq (1 + 2 \beta)\frac{C_1}{3}
 \end{align}
 and
 \begin{align}
      \|\bz^t - \bz^{t,(\ell)} \|
     & \leq (1 + \beta) \|\bx^t -\bx^{t,(\ell)} \| + \beta \|\bx^{t-1}-\bx^{t-1,(\ell)} \| \\
     & \leq (1+ 2\beta) C_3 \sqrt{\frac{\log{n}}{n}}
 \end{align}
so that
\begin{align}
    \| \bz^t-\bz^{t,(\ell)}\| & \leq (1-\tau) (1+ 2\beta) C_3 \sqrt{\frac{\log{n}}{n}} +  (1 + 2 \beta) \frac{C_1}{3}  \\
    & \leq 2 C_1
\end{align}
 by taking $n$ large enough.

Now for the incoherence condition we have
\begin{align}
     |\ba_{\ell}^T ((1-\tau) \bz^{t,(\ell)}-\bz^t)| 
     \leq &(1-\tau) 5 \sqrt{\log(n)} \|\bz^{t,(\ell)}- \bx_* \| \\
     &+ \tau |\ba_{\ell}^T (\bx^t-\bx_* + \beta(\bx^t-\bx_* +\bx_*-\bx^{t-1}))| \\
     \leq & (1-\tau) 5 \sqrt{\log(n)} \|\bz^{t,(\ell)} -\bz^t +\bz^t-\bx_* \| \\
     &+ \tau |\ba_{\ell}^T ((1+\beta) (\bx^t-\bx_*)+(\bx_*-\bx^{t-1}))| \\
     \leq & (1-\tau) 5 \sqrt{\log(n)} (\|\bx^{t,(\ell)}-\bx^t \|)  \\
     &+(1-\tau) 5 \sqrt{\log(n)} (\beta \|(\bx^{t,(\ell)}-\bx^t +\bx^{t-1}-\bx^{t-1,(\ell)} \|) \\
     & + (1-\tau) 5 \sqrt{\log(n)} (1+2\beta) C_1 \\
     & + \tau ((1+\beta) C_2 \sqrt{\log(n)}+ C_2 \sqrt{\log(n)}) \\
     \leq & (1-\tau) 5 \sqrt{\log(n)} \left((1+2 \beta) C_3 \sqrt{\frac{\log(n)}{n}} +(1+2\beta) C_1 \right) \\
     &+ \tau(2+\beta) C_2 \sqrt{\log(n)} \\
     \leq & \frac{C_2}{3} \sqrt{\log(n)} 
\end{align}
for large enough $n$.

As $\bxi^t$ is in the RIC, for the choice $\eta=\frac{1}{L}$, $\beta=\frac{\sqrt{L}-\sqrt{\mu}}{\sqrt{L}+\sqrt{\mu}}$ the eigenvalues of $\bM_2(\cdot)$ are bounded by $1-\sqrt{\mu}/\sqrt{L}$. After using our induction hypothesis, we get
\begin{align}
\left\| \bM_2(t) \right\|_2 \left\| \begin{bmatrix}
        \bx^{t}-\bx^{t,(\ell)} \\
        \bx^{t-1}-\bx^{t-1,(\ell)}
    \end{bmatrix} \right\|_2  &\leq \left(1-\frac{\sqrt{\mu}}{\sqrt{L}} \right) \left\| \begin{bmatrix}
        \bx^{t}-\bx^{t,(\ell)} \\
        \bx^{t-1}-\bx^{t-1,(\ell)}
    \end{bmatrix} \right\|_2  \\
    & \leq  \left(1-\frac{\sqrt{\mu}}{\sqrt{L}} \right) C_3 \sqrt{\frac{\log (n)}{n}} \label{eq:looNineq1}
\end{align}
On the other hand, if we make $\bz^{t,(\ell)} = \bx^{t,(\ell)}+\beta(\bx^{t,(\ell)}-\bx^{t-1,(\ell)})$, by the definition of $f^{(\ell)}$ we have:
\begin{align}
    \nabla f (\bz^{t,(\ell)}) - \nabla f^{(\ell)} (\bz^{t,(\ell)}) \\ = \frac{1}{m} ((\ba_{\ell}^T \bz^{t,(\ell)})^2-(\ba_{\ell}^T \bx_*)^2) \ba_{\ell} \ba_{\ell}^T \bz^{t,(\ell)}  \\
\end{align}
so that
\begin{align}
    \left\|\begin{bmatrix}
        \nabla f (\bz^{t,(\ell)}) - \nabla f^{(\ell)}  (\bz^{t,(\ell)}) \\
        \boldsymbol{0}
    \end{bmatrix} \right\|_2 \\
\leq \frac{1}{m} \|\ba_{\ell}\| |\ba_{\ell}^T \bz^{t,(\ell)} | |(\ba_{\ell}^T \bz^{t,(\ell)})^2-(\ba_{\ell}^T \bx_*)^2| \label{eq:Nestegraddiffbd}
\end{align}
\end{proof}

We now can bound each term in \eqref{eq:Nestegraddiffbd}. We have that
\begin{equation}
    \| \ba_{\ell}\| \leq \sqrt{6n}
\end{equation}
with probability at least $1-\cO(m e^{-1.5 n})$, by concentration inequality \eqref{eq:concnorm}. For the second factor we have
\begin{align}
    |\ba_{\ell}^T \bz^{t,(\ell)}| \leq & |\ba_{\ell}(\bx^{t,(\ell)}-\bx^t)| + |\ba_{\ell} (\bx^t -\bx_*)| \nonumber \\
    & +\beta|\ba_{\ell} (\bx^{t,(\ell)}-\bx^t)| + \beta |\ba_{\ell}^T (\bx^t-\bx_*)| \nonumber \\
    & + \beta |\ba_{\ell}(\bx_*-\bx^{t-1})| + \beta |\ba_{\ell}^T(\bx^{t-1}-\bx^{t-1,(\ell)})| \nonumber \\
    & + |\ba_{\ell}^T \bx_*|
\end{align}
Using the induction hypothesis and locality we obtain
\begin{align}
    |\ba_{\ell}^T \bz^{t,(\ell)}| \leq & 5 \sqrt{\log(n)} \left( C_3 \sqrt{\frac{\log(n)}{n}} + C_1 \right. \\& \left. + C_3 \sqrt{\frac{\log(n)}{n}} + 2 \beta C_1 + \beta C_3 \sqrt{\frac{\log(n)}{n}}  \right) \\
    & + 5 \sqrt{\log(n)}
\end{align}
so that
\begin{align}
    |\ba_{\ell}^T \bz^{t,(\ell)}|  \leq & 5 \sqrt{\log(n)} \left( (1+2\beta) C_3 \sqrt{\frac{\log(n)}{n}} +(1+2 \beta) C_1 \right) \\
   & + 5 \sqrt{\log(n)} \\
 \leq & (C_4 + 5) \sqrt{\log(n)} 
\end{align}
for sufficiently large $n$ so that $\sqrt{\frac{\log(n)}{n}}$ is as small as needed.
For the third factor, by making $\bz^{t,(\ell)} = \bx^{t,(\ell)}+\beta(\bx^{t,(\ell)}-\bx^{t-1,(\ell)})$,
\begin{align}
    |(\ba_{\ell}^T \bz^{t,(\ell)})^2-(\ba_{\ell}^T\bx_*)^2| 
    \leq &|(\ba_{\ell}^T \bz^{t,(\ell)}-\ba_{\ell}^T\bx_*)^2 - 2 \ba_{\ell}^T (\bz^{t,(\ell)}-\bx_*) \ba_{\ell}^T \bx_* | \\
    \leq &|\ba^T(\bz^{t,(\ell)}-\bx_*)| |\ba_{\ell}^T (\bz^{t,(\ell)}-\bx_*) -2 \ba_{\ell}^T\bx_*| \nonumber \\
    \leq &(C_2 \sqrt{\log(n)}+ 2 \beta C_2 \sqrt{\log(n)}) \\
    & \cdot (C_2 \sqrt{\log(n)}+2 \beta C_2 \sqrt{\log(n)}+10 \sqrt{\log(n)}) \\
    = & C_2 (1+2 \beta) \sqrt{\log(n)} ((1+2 \beta) C_2 \sqrt{\log(n)}+10 \sqrt{\log(n)})
\end{align}
Combining all these bounds gives
\begin{align}
    \frac{1}{m} \|\nabla f (\bz^{t, (\ell)})-\nabla f^{(\ell)} (\bz^{t,(\ell)})\| 
    & \leq \frac{1}{m} \sqrt{6n} (C_4+5) \sqrt{\log(n)} C_2(C_2+10) \log(n) \\
    & \leq (C_4 +5)C_2 (C_2+10) \frac{n \log(n)}{m} \sqrt{\frac{\log(n)}{n}} \\
    & \leq c \sqrt{\frac{\log(n)}{n}} \label{eq:looNineq2}
\end{align}
where $c$ can be made as small as needed by taking $m$ large. Finally, the combination of bounds \eqref{eq:looNineq1} and \eqref{eq:looNineq2} with equation \eqref{eq:looNineqdecomp} yields
\begin{align}
    \begin{bmatrix}
        \bx^{t+1} -\bx^{t+1,(\ell)} \\
        \bx^t -\bx^{t,(\ell)}
    \end{bmatrix}
    &\leq  \left[\left(1-\frac{\sqrt{\mu}}{\sqrt{L}}\right) +c \eta \right] C_3 \sqrt{\frac{\log(n)}{n}}  \\ &\leq C_3 \sqrt{\frac{\log(n)}{n}}
\end{align}
because $c$ can be made as small as needed and $\eta=\frac{1}{L}$.

\end{document}